\documentclass[11pt]{amsart}
\usepackage[margin=1in]{geometry}

\usepackage{amsmath}
\usepackage{amscd}
\usepackage{tikz-cd}
\usepackage{amssymb}
\usepackage[mathscr]{euscript}
\usepackage{enumerate}
\usepackage{hyperref}

\setlength{\parindent}{15pt}
\setlength{\parskip}{5pt}

\DeclareMathOperator{\End}{End}

\DeclareMathOperator{\Spec}{Spec}
\DeclareMathOperator{\Ann}{Ann}
\DeclareMathOperator{\Ass}{Ass}
\DeclareMathOperator{\hgt}{ht}
\DeclareMathOperator{\gr}{gr}
\DeclareMathOperator{\im}{im}
\DeclareMathOperator{\injdim}{injdim}

\DeclareMathOperator{\Supp}{Supp}

\frenchspacing

\usepackage{amsthm}
\usepackage[all,cmtip]{xy}

\theoremstyle{plain}
\newtheorem{thm}{Theorem}[section]

\newtheorem*{mainthm}{Main Theorem}
\newtheorem{prop}[thm]{Proposition}
\newtheorem{lem}[thm]{Lemma}
\newtheorem{cor}[thm]{Corollary}

\theoremstyle{definition}
\newtheorem{definition}[thm]{Definition}

\theoremstyle{remark}
\newtheorem{remark}[thm]{Remark}

\newtheorem{question}[thm]{Question}

\numberwithin{equation}{thm}

\newcommand{\pref}[1]{(\ref{#1})}

\newcommand{\fp}{\mathfrak{p}}
\newcommand{\fq}{\mathfrak{q}}
\newcommand{\fs}{\mathfrak{s}}
\newcommand{\fm}{\mathfrak{m}}

\begin{document}

\title[Injective dimension of $F$- and $D$-modules]{A dichotomy for the injective dimension of $F$-finite $F$-modules and holonomic $D$-modules}

\author{Nicholas Switala and Wenliang Zhang}

\address{Department of Mathematics, Statistics, and Computer Science\\University of Illinois at Chicago\\322 SEO (M/C 249)\\851 S. Morgan Street\\Chicago, IL 60607}

\email{nswitala@uic.edu, wlzhang@uic.edu}

\dedicatory{Dedicated to Gennady Lyubeznik on the occasion of his sixtieth birthday}

\subjclass[2010]{Primary 13N10, 13A35; secondary 13D45}

\keywords{$D$-modules, $F$-modules, local cohomology}

\thanks{The first author acknowledges NSF support through grant DMS-1604503. The second author is partially supported by the NSF through grant DMS-1606414.}

\begin{abstract}
Let $M$ be either a holonomic $D$-module over a formal power series ring with coefficients in a field of characteristic zero, or an $F$-finite $F$-module over a noetherian regular ring of characteristic $p > 0$. We prove that $\injdim_R M$ enjoys a dichotomy property: it has only two possible values, $\dim \Supp_R M - 1$ or $\dim \Supp_R M$.
\end{abstract}

\maketitle

\section{Introduction}\label{Introduction}

This paper is concerned with the injective dimension of two kinds of modules: $D$-modules $M$ over a formal power series ring $R = k[[x_1, \ldots, x_n]]$ where $k$ is a field of characteristic zero, and $F$-modules $M$ over a noetherian regular ring $R$ of characteristic $p>0$. In both cases, the upper bound
\begin{equation}\label{basicbound}
\injdim_R M \leq \dim \Supp_R M
\end{equation}
holds by the foundational work of Lyubeznik (\cite[Theorem 2.4(b)]{LyubeznikFinitenessLocalCohomology} in the $D$-module case and \cite[Theorem 1.4]{LyubeznikFModulesApplicationsToLocalCohomology} in the $F$-module case). Here $\injdim_R M$ denotes the injective dimension of $M$ as an $R$-module. Lyubeznik's proof also shows that \pref{basicbound} is true if $R$ is a polynomial ring instead of a formal power series ring. In the special case of local cohomology in positive characteristic, \pref{basicbound} is due originally to Huneke and Sharp \cite[Corollary 3.9]{HunekeSharpBassnumbersoflocalcohomologymodules}; in equicharacteristic zero, Lyubeznik shows further \cite[Theorem 3.4(b)]{LyubeznikFinitenessLocalCohomology} that \pref{basicbound} holds for the local cohomology of any noetherian regular ring. 

In either setting, if $\fp \subseteq R$ is a prime ideal of dimension $d$ and $E(R/\fp)$ is the $R$-module injective hull of $R/\fp$, then $E(R/\fp)$ is a $D$-module (resp. $F$-module) with injective dimension zero whose support has dimension $d$. It is clear from this example that without imposing further hypotheses, there does not exist a nontrivial \emph{lower} bound for $\injdim_R M$ in terms of $\dim \Supp_R M$. 

Our main result is the following theorem.

\begin{mainthm}[Theorems \ref{holonomic over power series} and \ref{F-finite over regular char p}]\label{maintheorem}
Let $M$ be either a holonomic $D$-module over $R = k[[x_1, \ldots, x_n]]$ where $k$ is a field of characteristic zero, or an $F$-finite $F$-module over a noetherian regular ring $R$ of characteristic $p > 0$. Then $\injdim_R M \geq \dim \Supp_R M - 1$.
\end{mainthm}

Our Main Theorem combined with the upper bound \pref{basicbound} shows that $\injdim_R M$ (with $M$ as in the theorem) enjoys a dichotomy property: it has only two possible values, either $\dim \Supp_R M - 1$ or $\dim \Supp_R M$.

In the case of a polynomial ring $R = k[x_1, \ldots, x_n]$ over a characteristic-zero field, Puthenpurakal \cite[Corollary 1.2]{PuthenpurakaInjectiveResolutionofLC} has shown that $\injdim_R M = \dim \Supp_R M$ whenever $M$ is a local cohomology module of $R$. This result was strengthened by Zhang \cite[Theorem 4.5]{zhanginjdim} who established this equality for all holonomic $D$-modules over polynomial rings, as well as for all $F$-finite $F$-modules over certain regular rings (finitely generated algebras over an infinite, positive-characteristic field). In the case of a formal power series ring, or in the general case of a positive-characteristic regular local ring, this equality need not hold: indeed, in both cases, the injective hull of $R/\fp$ where $\fp$ is a prime ideal of dimension one provides a counterexample (see Remark \ref{bound is sharp} below). 

The proof of our Main Theorem appears in sections \ref{injdim over power series} and  \ref{injdim of F-modules}, following the most technical part of the paper: section \ref{minimal resolutions}, an in-depth study of the last terms of minimal injective resolutions over the rings considered in our Main Theorem as well as their localizations. One key observation is that the assumption that $R$ is Jacobson in \cite[Theorem 3.3]{zhanginjdim} can be weakened; to this end, we introduce a notion of {\it pseudo-Jacobson} rings in section \ref{Pseudo-Jacobson rings}.

During the preparation of this paper, we were made aware of the article \texttt{arXiv:1603.06639v1}, which investigates the injective dimension of local cohomology modules $H^j_I(R)$ when $R$ is a formal power series ring in characteristic zero and the dimension of the support of $H^j_I(R)$ is at most 4. In November 2017 an updated version, \texttt{arXiv:1603.06639v2}, appeared; it investigates the injective dimension of $F$-finite $F$-modules over a regular local ring in characteristic $p > 0$ and modules of the form $(H^j_I(R))_g$ over a regular local ring $R$ in characteristic zero (here $g \in R$). The approach in our paper is different: in order to investigate the injective dimension of holonomic $D$-modules, we introduce and study the notion of pseudo-Jacobson rings. Such an approach works well for both holonomic $D$-modules and $F$-finite $F$-modules, further illustrating the nice parallel between these two classes of modules.

\subsection*{Acknowledgments}
The authors thank Mel Hochster and Gennady Lyubeznik for helpful discussions.

\section{$D$-module and $F$-module preliminaries}\label{FandDmodules}

In this section, we review some basic notions concerning $D$-modules, $F$-modules, and local cohomology that will be needed throughout the paper. We begin with general notation and conventions.

Throughout the paper, a \emph{ring} is commutative with $1$ unless otherwise specified, and a \emph{local ring} is always noetherian.

If $R$ is a noetherian ring and $M$ is an $R$-module, we will denote the minimal injective resolution of $M$ as an $R$-module by $E^{\bullet}_R(M)$ (or $E^{\bullet}(M)$ if $R$ is understood). The $R$-module $E^0_R(M) = E^0(M)$ (the \emph{injective hull} of $M$) will simply be denoted $E_R(M)$ (or $E(M)$). We denote the set of associated primes of $M$ by $\Ass M$ or $\Ass_R M$. 

If $R$ is a ring and $S \subseteq R$ is a multiplicative subset, we will use without further comment the one-to-one correspondence between prime ideals of $S^{-1}R$ and prime ideals of $R$ that do not meet $S$. In particular, if we write ``let $S^{-1}\fp$ be a prime ideal of $S^{-1}R$'', it is to be understood that $\fp \subseteq R$ is a prime ideal and $\fp \cap S = \emptyset$.

\subsection{$D$-modules} Our basic references for $D$-modules are EGA \cite{EGAIV4} and the book \cite{BjorkBookRIngDiffOperators} of Bj\"{o}rk.

Let $R$ be a ring and $k \subseteq R$ a subring. We denote by $D(R,k)$ (or simply $D$, if $R$ and $k$ are understood) the (usually non-commutative) ring of $k$-linear differential operators on $R$, which is a subring of $\End_k(R)$. This ring is recursively defined as follows \cite[\S 16]{EGAIV4}.  A differential operator $R \rightarrow R$ of order zero is multiplication by an element of $R$.  Supposing that differential operators of order $\leq j-1$ have been defined, $d \in \End_k(R)$ is said to be a differential operator of order $\leq j$ if, for all $r \in R$, the commutator $[d,r] \in \End_k(R)$ is a differential operator of order $\leq j-1$, where $[d,r] = dr - rd$ (the products being taken in $\End_k(R)$).  We write $D^j(R)$ for the set of differential operators on $R$ of order $\leq j$ and set $D(R,k) = \cup_j D^j(R)$.  If $d \in D^j(R)$ and $d' \in D^{\l}(R)$, it is easy to prove by induction on $j + \l$ that $d' \circ d \in D^{j+\l}(R)$, so $D(R,k)$ is a ring.

The most important case for us will be where $k$ is a field of characteristic zero and $R = k[[x_1, \ldots, x_n]]$ is a formal power series ring over $k$. In this case, the ring $D$, viewed as a left $R$-module, is freely generated by monomials in the partial differentiation operators $\partial_1 = \frac{\partial}{\partial x_1}, \ldots, \partial_n = \frac{\partial}{\partial x_n}$ (\cite[Theorem 16.11.2]{EGAIV4}: here the characteristic-zero assumption is necessary).  This ring has an increasing filtration $\{D(\nu)\}$, called the \emph{order filtration}, where $D(\nu)$ consists of those differential operators of order $\leq \nu$.  The associated graded object $\gr(D) = \oplus D(\nu)/D(\nu-1)$ with respect to this filtration is isomorphic to $R[\zeta_1, \ldots, \zeta_n]$ (a commutative ring), where $\zeta_i$ is the image of $\partial_i$ in $D(1)/D(0) \subseteq \gr(D)$.  

By a \emph{$D$-module} we always mean a \emph{left} module over the ring $D$ unless otherwise specified. If $M$ is a finitely generated $D$-module, there exists a \emph{good filtration} $\{M(\nu)\}$ on $M$, meaning that $M$ becomes a filtered left $D$-module with respect to the order filtration on $D$ \emph{and} $\gr(M) = \oplus M(\nu)/M(\nu-1)$ is a finitely generated $\gr(D)$-module.  We let $J$ be the radical of $\Ann_{\gr(D)} \gr(M) \subseteq \gr(D)$ and set $d(M) = \dim \gr(D)/J$. The ideal $J$, and hence the number $d(M)$, is independent of the choice of good filtration on $M$.  By \emph{Bernstein's theorem}, if $M \neq 0$ is a finitely generated $D$-module, we have $n \leq d(M) \leq 2n$.  

\begin{definition}\label{holonomic}
Let $M$ be a finitely generated $D$-module. We say that $M$ is \emph{holonomic} if $M = 0$ or $d(M) = n$.
\end{definition} 

The ring $R$ itself is a holonomic $D$-module. More generally, local cohomology modules of $R$ are holonomic $D$-modules (Proposition \ref{local coh omnibus}(c)). The ring $D$ is a holonomic $D$-module if and only if $n=0$ (so $D = k$), since $d(D) = 2n$. 

We collect in the following proposition the basic results on $D$-modules that we will use below.

\begin{prop}\label{D-modules omnibus}
Let $R = k[[x_1, \ldots, x_n]]$ where $k$ is a field of characteristic zero, and let $M$ be a $D(R,k)$-module.
\begin{enumerate}[(a)]
\item $\injdim_R M \leq \dim \Supp_R M$ \cite[Theorem 2.4(b)]{LyubeznikFinitenessLocalCohomology};
\item If $S \subseteq R$ is a multiplicative subset, $S^{-1}M$ is both a $D$-module and a $D(S^{-1}R, k)$-module; and if $M$ is of finite length as a $D$-module, $S^{-1}M$ is of finite length as a $D(S^{-1}R, k)$-module (\cite[Proposition 2.5]{zhanginjdim}; this is true for any domain $R$ and subring $k$);
\item If $M$ is finitely generated as a $D$-module (in particular, if $M$ is holonomic), then $M$ has finitely many associated primes as an $R$-module \cite[Theorem 2.4(c)]{LyubeznikFinitenessLocalCohomology};
\item If $M$ is holonomic, then $M$ is of finite length as a $D$-module \cite[Theorem 2.7.13]{BjorkBookRIngDiffOperators};
\item If $0 \rightarrow M' \rightarrow M \rightarrow M'' \rightarrow 0$ is a short exact sequence of $D$-modules and $D$-linear maps, then $M$ is holonomic (resp. finite length) if and only if $M'$ and $M''$ are holonomic (resp. finite length).
\end{enumerate}
\end{prop}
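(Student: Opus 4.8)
The plan is to dispatch parts (a)--(d) as direct applications of the cited results, reserving a genuine argument only for part (e). Indeed, (a) is exactly \cite[Theorem 2.4(b)]{LyubeznikFinitenessLocalCohomology}, (c) is \cite[Theorem 2.4(c)]{LyubeznikFinitenessLocalCohomology}, and (d) is \cite[Theorem 2.7.13]{BjorkBookRIngDiffOperators}. For (b), the one point worth spelling out is why $S^{-1}M$ carries a $D$-module structure at all: by induction on the order, a $k$-linear differential operator on $R$ of order $\leq j$ extends uniquely to one on $S^{-1}R$ via the quotient rule, so $D(R,k)$ acts on $S^{-1}R$ and hence on $S^{-1}M = S^{-1}R \otimes_R M$; the ring $D(S^{-1}R,k)$ acts in the obvious way, and the two actions restrict to the same action of $D(R,k)$. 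The finite-length statement is then precisely \cite[Proposition 2.5]{zhanginjdim}.

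For (e) I would handle the two equivalences separately. Finite length as a module over the (noncommutative) ring $D$ is additive on short exact sequences, exactly as over a commutative ring, so $M$ has finite length if and only if $M'$ and $M''$ both do; no $D$-module theory is needed here. For holonomicity, first observe that $D$ is noetherian (its order filtration has associated graded $R[\zeta_1, \ldots, \zeta_n]$, which is noetherian), so in the sequence $M$ is finitely generated precisely when $M'$ and $M''$ are; assume this holds. The crux is the identity $d(M) = \max\{d(M'), d(M'')\}$. To prove it, choose a good filtration $\{M(\nu)\}$ on $M$, restrict it to $M'$, and take the induced quotient filtration on $M''$; both induced filtrations are again good, and one obtains a short exact sequence $0 \to \gr(M') \to \gr(M) \to \gr(M'') \to 0$ of finitely generated $\gr(D)$-modules. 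Comparing supports in $\Spec \gr(D)$ yields $\sqrt{\Ann_{\gr(D)} \gr(M)} = \sqrt{\Ann_{\gr(D)} \gr(M')} \cap \sqrt{\Ann_{\gr(D)} \gr(M'')}$, hence the claimed formula for $d$. Since Bernstein's theorem gives $d(N) \geq n$ for any nonzero finitely generated $D$-module $N$, the equality $d(M) = n$ holds if and only if each of $M'$ and $M''$ is either $0$ or has $d = n$ --- that is, if and only if $M'$ and $M''$ are holonomic.

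I expect the only non-formal step to be the claim that the restricted and quotient filtrations in the argument for (e) are good and that the associated graded sequence is exact; I would either cite Bj\"{o}rk's book for this or derive it from an Artin--Rees argument over the noetherian ring $\gr(D)$. Everything else is a verbatim citation or elementary module theory, so I anticipate no serious difficulty.
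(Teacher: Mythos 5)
Your proposal is correct and follows essentially the same route as the paper, which likewise treats (a)--(d) as citations and notes that the holonomic half of (e) is proved by exactly the good-filtration argument (Bj\"ork's Proposition 1.5.2) that you reproduce. The one step you flag as potentially delicate is in fact immediate: the quotient filtration on $M''$ is visibly good, the induced filtration on $M'$ is good because $\gr(M')$ is a $\gr(D)$-submodule of the finitely generated module $\gr(M)$ over the noetherian ring $\gr(D)$, and exactness of $0 \to \gr(M') \to \gr(M) \to \gr(M'') \to 0$ is automatic for induced and quotient filtrations, so no Artin--Rees argument is needed.
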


As remarked in \cite[2.2(c)]{LyubeznikFinitenessLocalCohomology}, a proof of the ``holonomic'' part of Proposition \ref{D-modules omnibus}(e) is analogous to the proof of \cite[Proposition 1.5.2]{BjorkBookRIngDiffOperators} (the ``finite length'' part is a well-known fact about modules over any ring). We note that part (b) of the proposition does \emph{not} assert that if $M$ is of finite length as a $D$-module, so is $S^{-1}M$. This is not known even in the case where $S^{-1}R = R_f$ for a single element $f \in R$. Part (b) only makes the weaker claim that $S^{-1}M$ is of finite length as a $D(S^{-1}R, k)$-module.

\subsection{$F$-modules} Our basic reference for $F$-modules is the paper \cite{LyubeznikFModulesApplicationsToLocalCohomology} of Lyubeznik in which they were introduced.

Let $R$ be a noetherian regular ring of characteristic $p>0$. Let $F_R$ denote the Peskine-Szpiro functor: 
\[F_R(M)=R' \otimes_R M\]
for each $R$-module $M$, where $R'$ denotes the $R$-module that is the same as $R$ as a left $R$-module and whose right $R$-module structure is given by $r'\cdot r=r^pr'$ for all $r'\in R'$ and $r\in R$.

\begin{definition}[Definitions 1.1, 1.9 and 2.1 in \cite{LyubeznikFModulesApplicationsToLocalCohomology}]
\label{definition: F-modules}
An $F_R$-{\it module} (or \emph{$F$-module}, if $R$ is understood) is an $R$-module $M$ equipped with an $R$-linear isomorphism $\theta_{M}:M\to F_R(M)$. 

A \emph{homomorphism} between $F_R$-modules $(M,\theta_{M})$ and $(N,\theta_N)$ is an $R$-linear map $\varphi:M\to N$ such that the following diagram commutes:
\[\xymatrix{
M \ar[r]^{\theta_{M}} \ar[d]^{\varphi} & F_R(M) \ar[d]^{F_R(\varphi)}\\
N \ar[r]^{\theta_{N}} & F_R(N).
}\]

A {\it generating morphism} of an $F_R$-module $(M,\theta_{M})$ is an $R$-linear map $\beta:M'\to F_R(M')$, where $M'$ is an $R$-module, such that the direct limit of the diagram 
\[
\xymatrix{
M' \ar[r] \ar[d] & F_R(M') \ar[r]^{F_R(\beta)} \ar[d]^{F_R(\beta)} & F^2_R(M')\ar[r] \ar[d]^{F^2_R(\beta)} &\cdots \\
F_R(M') \ar[r]^{F_R(\beta)} & F^2_R(M') \ar[r]^{F^2_R(\beta)} & F^3_R(M')\ar[r] &\cdots 
}\] 
is the map $\theta_{M}:M\to F_R(M)$.

An $F_R$-module $M$ is called $F_R$-{\it finite} (or \emph{$F$-finite}) if it admits a generating morphism $\beta:M'\to F_R(M')$ such that $M'$ is a finitely generated $R$-module. 
\end{definition}

The counterpart to Proposition \ref{D-modules omnibus} for $F$-modules is the following.

\begin{prop}\label{F-modules omnibus}
Let $R$ be a noetherian regular ring of characteristic $p > 0$, and let $M$ be an $F$-module.
\begin{enumerate}[(a)]
\item $\injdim_R M \leq \dim \Supp_R M$ \cite[Theorem 1.4]{LyubeznikFModulesApplicationsToLocalCohomology};
\item The minimal injective resolution $E^{\bullet}(M)$ is a complex of $F$-modules and $F$-module morphisms \cite[Example 1.2(b'')]{LyubeznikFModulesApplicationsToLocalCohomology};
\item If $S \subseteq R$ is a multiplicative subset, then $S^{-1}M$ is an $F_{S^{-1}R}$-module; and if $M$ is $F$-finite, then $S^{-1}M$ is $F_{S^{-1}R}$-finite (both statements follow from \cite[Remark 1.0(i)]{LyubeznikFModulesApplicationsToLocalCohomology});
\item If $M$ is $F$-finite, $M$ has finitely many associated primes as an $R$-module \cite[Theorem 2.12(a)]{LyubeznikFModulesApplicationsToLocalCohomology};
\item If $0 \rightarrow M' \rightarrow M \rightarrow M'' \rightarrow 0$ is a short exact sequence of $F$-modules and $F$-module morphisms, then $M$ is $F$-finite if and only if $M'$ and $M''$ are $F$-finite \cite[Theorem 2.8]{LyubeznikFModulesApplicationsToLocalCohomology};
\item If $E$ is an injective $R$-module, $E$ is an $F$-module \cite[Proposition 1.5]{HunekeSharpBassnumbersoflocalcohomologymodules}.
\end{enumerate}
\end{prop}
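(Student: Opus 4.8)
The plan is essentially bookkeeping: every item in the proposition is a result, or an immediate consequence of a result, available in the cited literature, so the write-up will consist of pointing to the right place and, where a reference proves slightly more or slightly less than what we need, bridging the gap. I would organize the argument around the one structural fact that underpins items (b), (e), and (f): since $R$ is regular of characteristic $p$, the Frobenius endomorphism is flat (Kunz's theorem), hence $R'$ is a flat left $R$-module and the Peskine--Szpiro functor $F_R$ is exact, commutes with arbitrary direct sums, and commutes with localization.

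For (a) I would simply quote \cite[Theorem 1.4]{LyubeznikFModulesApplicationsToLocalCohomology}; for (d), \cite[Theorem 2.12(a)]{LyubeznikFModulesApplicationsToLocalCohomology}; for (e), \cite[Theorem 2.8]{LyubeznikFModulesApplicationsToLocalCohomology}. For (f) I would recall the argument of Huneke--Sharp: reduce to the case $E = E_R(R/\fp)$ for a prime $\fp$, using that every injective $R$-module is a direct sum of such hulls and that $F_R$ commutes with direct sums; then use flatness of $R'$ to identify $F_R(E_R(R/\fp))$ with $E_R(R/\fp)$, and take $\theta_E$ to be this isomorphism. For (b), starting from an $F$-module $(M,\theta_M)$, I would show that the canonical inclusion $M \hookrightarrow E_R(M)$ is a morphism of $F$-modules (using (f) together with the naturality of $\theta$), pass to cokernels using exactness of $F_R$, and induct up the resolution; minimality is automatic, since the $F$-module structure is carried by the unique-up-to-isomorphism minimal injective resolution of the underlying $R$-module, and one obtains in addition that every differential is an $F$-module map. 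For (c), I would use the canonical isomorphism $S^{-1}F_R(M) \cong F_{S^{-1}R}(S^{-1}M)$, so that $S^{-1}\theta_M$ is the required isomorphism over $S^{-1}R$, and observe that a generating morphism $\beta\colon M' \to F_R(M')$ with $M'$ finitely generated localizes to a generating morphism $S^{-1}\beta$ with $S^{-1}M'$ finitely generated over $S^{-1}R$.

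The main \emph{obstacle} is that there is no new mathematics to do here: the content lies entirely in the cited works, and the real task is one of precision. I would take care that each statement is recorded in exactly the form and generality needed later: that (b) asserts not merely that each $E^i(M)$ is an $F$-module but that the whole complex $E^{\bullet}(M)$ is a complex of $F$-module morphisms; that (c) yields genuine $F_{S^{-1}R}$-finiteness of localizations and not only the $F$-module structure; and that (e) gives the ``two out of three'' statement for $F$-finiteness, which in turn feeds into the analysis of the last terms of minimal injective resolutions in sections \ref{minimal resolutions}--\ref{injdim of F-modules}.
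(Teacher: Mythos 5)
Your proposal is correct and matches the paper's treatment: Proposition \ref{F-modules omnibus} is stated there as a catalogue of known results, justified exactly by the citations you give (Lyubeznik's Theorem 1.4, Example 1.2(b''), Remark 1.0(i), Theorems 2.8 and 2.12(a), and Huneke--Sharp's Proposition 1.5), and your sketches of (b), (c), (f) via flatness of Frobenius and compatibility of $F_R$ with localization and direct sums are just the standard arguments behind those references. No further commentary is needed.
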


\subsection{Local cohomology}
We will also make use of local cohomology modules, for whose definition and basic properties we refer to \cite{LCBookBrodmannSharp}. The most important facts about local cohomology we will use are the following.

\begin{prop}\label{local coh omnibus}
\begin{enumerate}[(a)]
\item Local cohomology commutes with flat base change: if $R \rightarrow S$ is a flat homomorphism of noetherian rings, $I \subseteq R$ is an ideal, and $M$ is an $R$-module, then $H^i_{IS}(S \otimes_R M) \cong S \otimes H^i_I(M)$ as $S$-modules for all $i \geq 0$ \cite[Theorem 4.3.2]{LCBookBrodmannSharp}. In particular, local cohomology commutes with localization and completion.
\item If $R = k[[x_1, \ldots, x_n]]$ where $k$ is a field of characteristic zero, $M$ is a holonomic $D$-module, and $I \subseteq R$ is an ideal, then for all $i \geq 0$, the local cohomology module $H^i_I(M)$ is a holonomic $D$-module; in particular, $H^i_I(R)$ is a holonomic $D$-module \cite[2.2(d)]{LyubeznikFinitenessLocalCohomology}.
\item If $R$ is a noetherian regular ring of characteristic $p > 0$, $M$ is an $F$-finite $F$-module, and $I \subseteq R$ is an ideal, then for all $i \geq 0$, the local cohomology module $H^i_I(M)$ is an $F$-finite $F$-module; in particular, $H^i_I(R)$ is an $F$-finite $F$-module \cite[Proposition 2.10]{LyubeznikFModulesApplicationsToLocalCohomology}.
\item If $R$ is a Gorenstein ring and $\fm \subseteq R$ is a maximal ideal of height $n$, then $H^n_{\fm}(R) \cong E(R/\fm)$ as $R$-modules \cite[Lemma 11.2.3]{LCBookBrodmannSharp}.
\end{enumerate}
\end{prop}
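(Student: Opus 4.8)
\textbf{Plan for Proposition \ref{local coh omnibus}.} This is a collection of four standard facts, each of which is quoted from the literature, so the ``proof'' consists of pointing to the appropriate references and, where a small amount of glue is needed, supplying it. I will take the four parts in order.

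For part (a), the plan is to invoke the base-change theorem for local cohomology \cite[Theorem 4.3.2]{LCBookBrodmannSharp} directly: if $R \to S$ is flat, then $H^i_{IS}(S \otimes_R M) \cong S \otimes_R H^i_I(M)$ functorially in $M$. The two special cases follow by specializing $S$: take $S = S^{-1}R$ for a multiplicative set $S$ (localization is flat), or $S = \widehat{R}$ the completion of a noetherian local ring (completion is flat over a noetherian ring). No further argument is required.

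For part (b), the plan is to cite Lyubeznik's observation \cite[2.2(d)]{LyubeznikFinitenessLocalCohomology} that the class of holonomic $D$-modules over $R = k[[x_1,\ldots,x_n]]$ is closed under the formation of local cohomology. The mechanism, which I would recall in a sentence for the reader's benefit, is that $H^i_I(M)$ is computed by a \v{C}ech-type complex whose terms are localizations $M_f$; each $M_f$ is a holonomic $D$-module (it is finitely generated over $D$, being generated over $D_f = D$ by the image of $M$ together with $1/f$, using that $\partial_j(1/f^k)$ stays inside $M_f$), and holonomicity passes to subquotients by Proposition \ref{D-modules omnibus}(e), hence to the cohomology of the complex. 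Taking $M = R$ gives the holonomicity of $H^i_I(R)$. Part (c) is the exact positive-characteristic analogue: I would cite \cite[Proposition 2.10]{LyubeznikFModulesApplicationsToLocalCohomology}, whose proof runs along parallel lines (the \v{C}ech complex on localizations $R_f$, each of which is an $F$-finite $F$-module, with $F$-finiteness inherited by subquotients via Proposition \ref{F-modules omnibus}(e)), and specialize to $M = R$ for the last clause.

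Finally, for part (d), the plan is to combine two standard facts about a Gorenstein local ring $(R,\fm)$ of dimension $n$: first, $H^i_\fm(R) = 0$ for $i \neq n$ and $H^n_\fm(R)$ is the top local cohomology (so $R$ is Cohen--Macaulay with a canonical module), and second, for a Gorenstein local ring the canonical module is $R$ itself, so by local duality $H^n_\fm(R) \cong E(R/\fm)$; this is precisely \cite[Lemma 11.2.3]{LCBookBrodmannSharp}, which I would cite, noting that the statement is applied to the localization $R_\fm$ and that $H^n_\fm(R) \cong H^n_{\fm R_\fm}(R_\fm)$ by part (a). There is no real obstacle here; the only point requiring any care is the bookkeeping in part (b)/(c) that a localization of a holonomic $D$-module (resp.\ $F$-finite $F$-module) is again one of finite length over the appropriate operator ring --- but this is exactly what Proposition \ref{D-modules omnibus}(b) and Proposition \ref{F-modules omnibus}(c) provide, so even that is already in hand.
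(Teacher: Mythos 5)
Your proposal is correct and, like the paper, is essentially citation-based; the one place where the paper supplies actual argument is part (d), and there your route differs slightly. The paper extends the Gorenstein \emph{local} lemma of Brodmann--Sharp to an arbitrary Gorenstein ring with a maximal ideal $\fm$ by applying $\Gamma_{\fm}$ to the minimal injective resolution $E^{\bullet}(R)$ and observing the result is $E(R/\fm)$ concentrated in degree $n$; you instead reduce to the local case by localizing at $\fm$. Your reduction works, but note that part (a) only gives $(H^n_{\fm}(R))_{\fm} \cong H^n_{\fm R_{\fm}}(R_{\fm})$, so you also need the (easy) facts that $H^n_{\fm}(R)$ is $\fm$-torsion and hence unchanged by localization at the maximal ideal $\fm$, and that $E_{R_{\fm}}(R_{\fm}/\fm R_{\fm}) \cong E_R(R/\fm)$ as $R$-modules; both are standard and used freely elsewhere in the paper. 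One imprecision in your aside on part (b): finite generation of $M_f$ over $D$ is not by itself what makes $M_f$ holonomic --- holonomicity of the localization is a genuine theorem (Bj\"ork, Theorem 3.4.1, quoted in Remark \ref{non-holonomic localization}) requiring a dimension estimate, not just a generating set. Since you cite \cite[2.2(d)]{LyubeznikFinitenessLocalCohomology} for the full statement, this does not create a gap, but the parenthetical justification as written would not suffice on its own.
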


Part (d) of Proposition \ref{local coh omnibus} is stated in \cite{LCBookBrodmannSharp} only in the case where $R$ is a Gorenstein \emph{local} ring, but the same result is true for maximal ideals in arbitrary Gorenstein rings, with the same proof: if $E^{\bullet}$ is the minimal injective resolution of $R$ as a module over itself, then $\Gamma_{\fm}(E^{\bullet})$, which computes the local cohomology of $R$ supported at $\fm$, is simply $E(R/\fm)$ concentrated in degree $n$.

Recall that if $M$ is a module over a noetherian ring $R$, $E^{\bullet}(M)$ is its minimal injective resolution, and $\fp \subseteq R$ is a prime ideal, then the \emph{Bass number} $\mu^i(\fp, M)$ is the (possibly infinite) number of copies of the indecomposable injective hull $E(R/\fp)$ occurring as direct summands of $E^i(M)$ (see \cite[\S 18]{MatsumuraCRT} for properties of Bass numbers, including their well-definedness). In particular, to say that $\mu^i(\fp, M) > 0$ is to say that $E(R/\fp)$ is a summand of $E^i(M)$, which implies that $\fp \in \Supp_R E^i(M)$. 

If $\fp \subseteq R$ is a prime ideal, the $R$-module $E(R/\fp)$ is naturally an $R_{\fp}$-module isomorphic to $E_{R_{\fp}}(R_{\fp}/\fp R_{\fp})$. We will use this fact repeatedly. For now, we remark that in conjunction with Proposition \ref{local coh omnibus}(d), this fact implies that $E(R/\fp)$ is a $D(R,k)$-module whenever $R$ is a Gorenstein ring and $k \subseteq R$ is a subring; indeed, since $R_{\fp}$ is a Gorenstein local ring, we have
\[
E(R/\fp) \cong E_{R_{\fp}}(R_{\fp}/\fp R_{\fp}) \cong H^{\hgt \fp}_{\fp R_{\fp}}(R_{\fp}) \cong (H^{\hgt \fp}_{\fp}(R))_{\fp}
\]
as $R_{\fp}$-modules, so since $H^{\hgt \fp}_{\fp}(R)$ is a $D$-module by Proposition \ref{local coh omnibus}(b), its localization $E(R/\fp)$ is as well. (In the $F$-module case, if $R$ is a regular local ring of characteristic $p>0$ and $\fp \subseteq R$ is a prime ideal, then $E(R/\fp)$ is an $F$-module by Proposition \ref{F-modules omnibus}(f).) 

Finally, we will need to make use of a lemma of Lyubeznik on Bass numbers and local cohomology.

\begin{lem}\cite[Lemma 1.4]{LyubeznikFinitenessLocalCohomology}\label{Bass numbers of local coh}
Let $R$ be a noetherian ring, let $\fp \subseteq R$ be a prime ideal, and let $M$ be an $R$-module such that the $R_{\fp}$-module $(H^i_{\fp}(M))_{\fp}$ is injective for all $i \geq 0$. 
\begin{enumerate}[(a)]
\item All differentials in the complex $(\Gamma_{\fp}(E^{\bullet}(M)))_{\fp}$ of $R_{\fp}$-modules are zero.
\item For all $i \geq 0$, the Bass numbers $\mu^i(\fp, M)$ and $\mu^0(\fp, H^i_{\fp}(M))$ are equal.
\end{enumerate}
\end{lem}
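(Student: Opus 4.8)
The plan is to unwind the definition of local cohomology supported at a single prime $\fp$ after localizing at $\fp$. First I would observe that for any $R$-module $N$, the functor $\Gamma_{\fp}(-)$ followed by localization at $\fp$ agrees with $\Gamma_{\fp R_{\fp}}(-)$ applied to the localization: that is, $(\Gamma_{\fp}(N))_{\fp} \cong \Gamma_{\fp R_{\fp}}(N_{\fp})$ as $R_{\fp}$-modules, since localization is exact and commutes with the union of kernels of multiplication by powers of generators of $\fp$ (this is the flat base change of Proposition \ref{local coh omnibus}(a), specialized to $R \to R_{\fp}$). Consequently, applying $(\Gamma_{\fp}(-))_{\fp}$ to the minimal injective resolution $E^{\bullet}(M)$ yields the complex $\Gamma_{\fp R_{\fp}}((E^{\bullet}(M))_{\fp})$, and since localization of a minimal injective resolution of $M$ is an injective resolution of $M_{\fp}$ over $R_{\fp}$ (though not necessarily minimal), the cohomology of this complex is $H^i_{\fp R_{\fp}}(M_{\fp}) = (H^i_{\fp}(M))_{\fp}$, which is injective over $R_{\fp}$ by hypothesis.

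For part (a), I would use the key structural fact that over the local ring $R_{\fp}$, the module $\Gamma_{\fp R_{\fp}}(E_{R_{\fp}}(R_{\fp}/\fq R_{\fp}))$ is zero unless $\fq R_{\fp} = \fp R_{\fp}$, in which case it is all of $E_{R_{\fp}}(R_{\fp}/\fp R_{\fp})$; more precisely, $\Gamma_{\fp}$ applied to $E^i(M)$ extracts exactly the $E(R/\fp)$-summands. Hence each term of the complex $(\Gamma_{\fp}(E^{\bullet}(M)))_{\fp}$ is a (possibly infinite) direct sum of copies of $E_{R_{\fp}}(R_{\fp}/\fp R_{\fp})$, the injective hull of the residue field of $R_{\fp}$ up to the Matlis-dual bookkeeping — in any case it is an injective $R_{\fp}$-module whose only associated prime is $\fp R_{\fp}$. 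Minimality of $E^{\bullet}(M)$ passes to minimality of the subcomplex $(\Gamma_{\fp}(E^{\bullet}(M)))_{\fp}$ in the sense that the image of each differential lies in the ``non-essential'' part; but since the cohomology $(H^i_{\fp}(M))_{\fp}$ is itself injective, each cocycle submodule is a direct summand, and a minimal complex with injective cohomology in every degree must have zero differentials. Concretely: if some differential $\partial^i$ were nonzero, then $\operatorname{im}\partial^i$ would be a nonzero submodule of the injective module $E^{i+1}$-part, hence would contain an $E(R/\fp)$; by minimality this copy cannot split off, contradicting that $\ker \partial^{i+1}/\operatorname{im}\partial^i$ is injective (so $\operatorname{im}\partial^i$ is a direct summand of $\ker\partial^{i+1}$, forcing the copy to split). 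I would phrase this last argument carefully, as it is the crux.

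Part (b) follows immediately from part (a): once all differentials in $(\Gamma_{\fp}(E^{\bullet}(M)))_{\fp}$ vanish, the $i$-th term of this complex equals its own $i$-th cohomology, which is $(H^i_{\fp}(M))_{\fp}$. On the other hand, the number of copies of $E_{R_{\fp}}(R_{\fp}/\fp R_{\fp})$ in the $i$-th term of $(\Gamma_{\fp}(E^{\bullet}(M)))_{\fp}$ equals, by definition, $\mu^i(\fp, M)$ (the localization-and-$\Gamma_{\fp}$ operation precisely counts $E(R/\fp)$-summands of $E^i(M)$). Since $(H^i_{\fp}(M))_{\fp}$ is injective and $\fp R_{\fp}$-primary, it is a direct sum of $\mu^0(\fp, H^i_{\fp}(M))$ copies of $E_{R_{\fp}}(R_{\fp}/\fp R_{\fp})$, and comparing the two counts gives $\mu^i(\fp, M) = \mu^0(\fp, H^i_{\fp}(M))$, as claimed. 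The main obstacle is the careful justification in part (a) that minimality of the injective resolution, combined with injectivity of the cohomology, forces the differentials to vanish — everything else is bookkeeping with the standard structure theory of injective modules over a noetherian (local) ring.
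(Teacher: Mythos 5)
Your setup is right (the terms of $(\Gamma_{\fp}(E^{\bullet}(M)))_{\fp}$ are direct sums of copies of $E(R/\fp)$, counted by $\mu^i(\fp,M)$, and the cohomology of this complex is $(H^i_{\fp}(M))_{\fp}$), and your deduction of (b) from (a) is correct. But the crux of (a), as you wrote it, contains two false steps. First, a nonzero submodule of a direct sum of copies of $E(R/\fp)$ need \emph{not} contain a copy of $E(R/\fp)$: the socle of a single copy is already a counterexample, so ``$\im \partial^i \neq 0$ hence $\im\partial^i$ contains an $E(R/\fp)$'' fails. Second, injectivity of $\ker\partial^{i+1}/\im\partial^i$ does \emph{not} make $\im\partial^i$ a direct summand of $\ker\partial^{i+1}$: a surjection onto an injective module need not split (over $k[[x]]$, multiplication by $x$ maps $E(k)$ onto $E(k)$ but does not split, as $E(k)$ is indecomposable). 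Splitting of $0 \to B \to Z \to H \to 0$ would require $B$ injective, not $H$. Relatedly, your parenthetical that the localized resolution is ``not necessarily minimal'' is backwards: localization of a minimal injective resolution at $\fp$ \emph{is} minimal over $R_{\fp}$, and this is exactly the input you need; ``the image lies in the non-essential part'' is not what minimality gives (it gives that each kernel is an essential submodule), and even granting your two claims, no contradiction with minimality is actually derived — a copy of $E(R/\fp)$ splitting off inside an injective term is not by itself incompatible with minimality.

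The correct argument (essentially Lyubeznik's) replaces your degree-wise splitting by an induction from degree $0$, and uses injectivity of the cohomology only through that induction. Write $C^i = (\Gamma_{\fp}(E^i(M)))_{\fp}$, $Z^i = \ker\partial^i$, $B^i = \im\partial^{i-1}$. Minimality of $E^{\bullet}(M)_{\fp}$ says $\ker(d^i_{\fp})$ is essential in $E^i(M)_{\fp}$; since any nonzero submodule of $C^i$ meets $\ker(d^i_{\fp})$ nontrivially and the intersection is automatically $\fp$-torsion, $Z^i = C^i \cap \ker(d^i_{\fp})$ is essential in $C^i$. Now induct: $B^0 = 0$, so $Z^0 \cong (H^0_{\fp}(M))_{\fp}$ is injective, hence a direct summand of $C^0$; an essential direct summand is the whole module, so $\partial^0 = 0$. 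Then $B^1 = 0$ and the same argument gives $\partial^1 = 0$, and so on. With all $\partial^i = 0$, your count in (b) goes through verbatim. So the strategy is the standard one, but the justification of (a) needs to be replaced along these lines.
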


\section{Localizations of $D$-modules}\label{inj dim of localization}

Throughout this section, let $R = k[[x_1, \ldots, x_n]]$ where $k$ is a field of characteristic zero, and let $D = D(R,k)$. The goal of this section is to prove the following generalization of Proposition \ref{D-modules omnibus}(a).

\begin{thm}\label{inj dim bound for arbitrary localization}
Let $M$ be a $D$-module, and let $S \subseteq R$ be a multiplicative subset. Then
\[
\injdim_{S^{-1}R} S^{-1}M \leq \dim \Supp_{S^{-1}R} S^{-1}M.
\]
\end{thm}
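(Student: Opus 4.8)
The plan is to reduce the statement to the already-known bound of Proposition \ref{D-modules omnibus}(a) by exhibiting $S^{-1}M$ as a localization of a $D(R,k)$-module whose support is controlled, and then transporting a minimal injective resolution along localization. The first observation is that localization is exact and that for any multiplicative set $S$ one has $\injdim_{S^{-1}R} S^{-1}N \leq \injdim_R N$ for every $R$-module $N$: indeed, localizing the minimal injective resolution $E^{\bullet}_R(N)$ yields an injective resolution of $S^{-1}N$ over $S^{-1}R$, since for a prime $\fp$ with $\fp\cap S=\emptyset$ the module $S^{-1}E(R/\fp)=E_{R_\fp}(R_\fp/\fp R_\fp)$ is $S^{-1}R$-injective, while for $\fp$ meeting $S$ the localization vanishes. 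Combined with the fact that $\dim\Supp_{S^{-1}R} S^{-1}M \leq \dim\Supp_R M$, this already proves the theorem whenever $M$ is itself a $D(R,k)$-module — but the subtlety is that we are not free to assume the given $D$-module is supported the way we want, so I would instead argue directly with $N=M$: apply Proposition \ref{D-modules omnibus}(a) to get $\injdim_R M \leq \dim\Supp_R M$, localize to get $\injdim_{S^{-1}R} S^{-1}M \leq \injdim_R M$, and bound $\dim\Supp_R M$. This chain gives $\injdim_{S^{-1}R} S^{-1}M \leq \dim\Supp_R M$, which is \emph{weaker} than what is claimed, since $\dim\Supp_R M$ can exceed $\dim\Supp_{S^{-1}R} S^{-1}M$.

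To get the sharp bound in terms of $\dim\Supp_{S^{-1}R}S^{-1}M$ rather than $\dim\Supp_R M$, I would refine the localization argument fiber by fiber, using that $S^{-1}M$ is a $D(S^{-1}R,k)$-module by Proposition \ref{D-modules omnibus}(b). The cleanest route: it suffices to treat the case $S^{-1}R = R_g$ for a single $g\in R$ and the case $S^{-1}R=R_\fp$ for a prime $\fp$, and then pass to a general $S$ as a filtered union / by localizing further; but in fact the decisive case is $S^{-1}R=R_\fp$, because injective dimension and support dimension can be checked after localizing at each associated (equivalently, each supporting) prime of $S^{-1}M$, and $R_\fp$ is again a regular ring to which — after completion — Proposition \ref{D-modules omnibus}(a) can be brought to bear. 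Concretely, for a prime $\fq$ of $S^{-1}R$ corresponding to $\fp\subseteq R$, one has $(S^{-1}M)_\fq = M_\fp$, which is a $D(R_\fp,k)$-module; its completion $\widehat{M_\fp}$ over $\widehat{R_\fp}\cong \kappa[[y_1,\dots,y_m]]$ (with $m=\hgt\fp$, using that a complete regular local ring containing a field is a power series ring) is a $D$-module over a power series ring of the required type, so Proposition \ref{D-modules omnibus}(a) applies and gives $\injdim_{\widehat{R_\fp}}\widehat{M_\fp}\leq \dim\Supp_{\widehat{R_\fp}}\widehat{M_\fp}$. Faithful flatness of completion then descends this to $\injdim_{R_\fp} M_\fp \leq \dim\Supp_{R_\fp}M_\fp$, and taking the supremum over all $\fq\in\Supp_{S^{-1}R}S^{-1}M$ — noting $\injdim_{S^{-1}R}S^{-1}M = \sup_\fq \injdim_{(S^{-1}R)_\fq}(S^{-1}M)_\fq$ and similarly for dimension of support — yields the desired inequality.

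The step I expect to be the main obstacle is the descent of the injective-dimension bound along completion, i.e. showing $\injdim_{R_\fp}M_\fp \leq \injdim_{\widehat{R_\fp}}\widehat{M_\fp}$ (or at least $\leq \dim\Supp$). This is not automatic: completion is faithfully flat, so it does not increase injective dimension, but one must be careful that the relevant Bass numbers $\mu^i(\fp R_\fp, M_\fp)$ are detected after completion — this works because $\mu^i(\fp R_\fp, M_\fp) = \mu^i(\widehat{\fp R_\fp}, \widehat{M_\fp})$ for the closed point, and vanishing of \emph{all} $\mu^i(\,\cdot\,, M_\fp)$ for $i>d$ can be checked by localizing-and-completing at each prime in the support, reducing to the closed-point statement, which in turn follows from Lyubeznik's theorem over the completed regular local ring. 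Assembling these local comparisons into a clean global statement over $S^{-1}R$ — keeping track that $\Supp_{\widehat{R_\fp}}\widehat{M_\fp}$ has the same dimension as $\Supp_{R_\fp}M_\fp$, which holds since completion of a local ring is faithfully flat with zero-dimensional fibers — is the bookkeeping that needs care, but no new idea beyond Proposition \ref{D-modules omnibus}(a), (b) and standard flat-base-change properties.
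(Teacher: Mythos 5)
Your overall strategy is sound and, at its core, parallels the paper's: reduce to the statement that $\injdim_{R_\fp} M_\fp \leq \dim\Supp_{R_\fp} M_\fp$ for each prime $\fp$ (the paper's Proposition \ref{inj dim bound for localization}), then pass to the completion $\widehat{R_\fp} \cong K[[z_1,\dots,z_c]]$ and apply Lyubeznik's bound, Proposition \ref{D-modules omnibus}(a), over that power series ring. Where you genuinely diverge is in the descent from the completion back to $R_\fp$: you do it prime by prime via Bass numbers, using $\mu^i(\fq R_\fq, M_\fq) = \mu^i(\fq\widehat{R_\fq}, \widehat{R_\fq}\otimes_{R_\fq} M_\fq)$ (flat base change for $\Ext$ with finitely presented first argument) together with $\dim\Supp_{\widehat{R_\fq}}(\widehat{R_\fq}\otimes M_\fq) = \dim\Supp_{R_\fq} M_\fq$, and then take the supremum over primes in the support. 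That works, and it is arguably more direct than the paper's argument, which instead runs an induction on $\dim\Supp_{R_\fp} M_\fp$: it first proves (Lemma \ref{localized local coh is injective}) that each $(H^i_\fp(M))_\fp$ is an injective $R_\fp$-module (by completing and using that a module supported only at the closed point is already an $\widehat{R_\fp}$-module, so no tensor-comparison of Bass numbers is needed), and then uses Lyubeznik's Lemma \ref{Bass numbers of local coh} plus minimality to kill the tail of $E^\bullet(M_\fp)$. What the paper's route buys is that the only flat-base-change facts needed are for local cohomology; what yours buys is the elimination of the induction, at the cost of invoking the Bass-number comparison under completion at every prime (which is standard, but you should state it as the lemma it is).

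The one place where you assert something that is not free is the claim that $\widehat{R_\fp}\otimes_{R_\fp} M_\fp$ ``is a $D$-module over a power series ring of the required type.'' A $D(R_\fp,k)$-module structure does not formally induce a $D(\widehat{R_\fp},K)$-module structure: the coefficient field $K$ of the completion is much larger than $k$, and one must define an action of the partials $\partial/\partial z_i$ of $\widehat{R_\fp}$ on the tensor product. This is exactly the content of the paper's Lemma \ref{completion is a D-module}, whose proof requires Lyubeznik's construction of derivations $\delta_i \colon R_\fp \to R_\fp$ that induce $\partial/\partial z_i$ on $\widehat{R_\fp}$, so that one can set $\partial_i\cdot(\hat s\otimes\mu) = \partial_i(\hat s)\otimes\mu + \hat s\otimes\delta_i(\mu)$. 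Since this is the only step in your argument that goes beyond Proposition \ref{D-modules omnibus} and standard flat base change, you should either prove it or cite it explicitly; with that supplied, your proof goes through.
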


In fact, it suffices to prove the following weaker statement.

\begin{prop}\label{inj dim bound for localization}
Let $M$ be a $D$-module, and let $\fp \in \Supp_R M$. Then 
\[
\injdim_{R_{\fp}} M_{\fp} \leq \dim \Supp_{R_{\fp}} M_{\fp}.
\]
\end{prop}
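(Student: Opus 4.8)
The plan is to reduce the statement about the localization $M_{\fp}$ over $R_{\fp}$ to the already-known bound for $D$-modules over a power series ring, Proposition \ref{D-modules omnibus}(a). The difficulty is that $R_{\fp}$ is not itself a power series ring; however, its completion $\widehat{R_{\fp}}$ is (by the Cohen structure theorem, since $R_{\fp}$ is a regular local ring containing a field of characteristic zero, with residue field a finitely generated extension of $k$... or at least a field of characteristic zero over which we can set up a differential structure). So the first step is to pass to the completion: using that injective dimension does not increase under faithfully flat base change along $R_{\fp} \to \widehat{R_{\fp}}$ (indeed, $E^{\bullet}(M_{\fp}) \otimes_{R_{\fp}} \widehat{R_{\fp}}$ is an injective resolution of $M_{\fp} \otimes \widehat{R_{\fp}}$, though not necessarily minimal, so $\injdim_{\widehat{R_{\fp}}} \widehat{M_{\fp}} \leq \injdim_{R_{\fp}} M_{\fp}$, and in fact equality holds because a nonzero $\mu^i(\fp R_{\fp}, M_{\fp})$ forces a nonzero contribution at the closed point after completion). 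The dimension of the support is preserved as well, since completion is faithfully flat with zero-dimensional fibers. Thus it suffices to prove $\injdim_{\widehat{R_{\fp}}} \widehat{M_{\fp}} \leq \dim \Supp \widehat{M_{\fp}}$.

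Now $\widehat{R_{\fp}}$ is a complete regular local ring containing the characteristic-zero field $k$, hence by Cohen's structure theorem $\widehat{R_{\fp}} \cong K[[y_1, \dots, y_d]]$ for a field $K \supseteq k$ of characteristic zero and $d = \hgt \fp$. The second step is to equip $\widehat{M_{\fp}}$ with a $D(\widehat{R_{\fp}}, K)$-module structure. Here I would use Proposition \ref{D-modules omnibus}(b): $M_{\fp}$ is a $D(R_{\fp}, k)$-module, and there should be a natural way to extend differential operators along the completion map so that $\widehat{M_{\fp}}$ becomes a module over $D(\widehat{R_{\fp}}, k)$, and then over $D(\widehat{R_{\fp}}, K)$ since $K$-linear differential operators on a power series ring over $K$ are in particular $k$-linear. (Alternatively, and perhaps more cleanly, one can observe that $M$ itself, if it arises from the local cohomology of $R$, localizes and completes to a local cohomology module of $\widehat{R_{\fp}}$, which is a $D$-module by Proposition \ref{local coh omnibus}(b); but for a general $D$-module $M$ one does need the extension-of-scalars argument for differential operators.) The key technical point is that the order filtration behaves well under completion, so that a finitely generated $D(R_{\fp},k)$-module completes to one over $D(\widehat{R_{\fp}}, K)$; but note that Proposition \ref{D-modules omnibus}(a) requires no finiteness hypothesis, so we only need the module structure, not finite generation.

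With $\widehat{M_{\fp}}$ exhibited as a $D(K[[y_1, \dots, y_d]], K)$-module, the third and final step is simply to invoke Proposition \ref{D-modules omnibus}(a) over the ring $\widehat{R_{\fp}} = K[[y_1, \dots, y_d]]$, which gives $\injdim_{\widehat{R_{\fp}}} \widehat{M_{\fp}} \leq \dim \Supp_{\widehat{R_{\fp}}} \widehat{M_{\fp}}$, and combine this with the two reductions above. The main obstacle is the middle step: making precise that the completion of a $D(R_{\fp}, k)$-module over the non-formal regular local ring $R_{\fp}$ is naturally a $D$-module over the formal power series ring $\widehat{R_{\fp}}$, with the field of definition enlarged from $k$ to the Cohen coefficient field $K$. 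This requires checking that a $k$-linear differential operator on $R_{\fp}$ extends $\widehat{R_{\fp}}$-continuously to $\widehat{R_{\fp}}$, that the extension is still a differential operator (of the same order) over $\widehat{R_{\fp}}$, and finally that one may replace $k$ by $K$ without losing the module structure — the last being automatic once we know $\widehat{M_{\fp}}$ is a module over all of $D(\widehat{R_{\fp}}, k) \supseteq D(\widehat{R_{\fp}}, K)$. Everything else is a formal consequence of flat base change for injective resolutions and Cohen's structure theorem.
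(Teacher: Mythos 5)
Your overall strategy (complete, transport the $D$-module structure to $\widehat{R_{\fp}}\otimes_{R_{\fp}}M_{\fp}$, apply Proposition \ref{D-modules omnibus}(a) over the power series ring $\widehat{R_{\fp}}$, then descend) is genuinely different from the paper's proof, which instead runs an induction on $\dim\Supp_{R_{\fp}}M_{\fp}$ using Lemma \ref{localized local coh is injective} and Lyubeznik's Bass-number Lemma \ref{Bass numbers of local coh}, and invokes the completion only for modules supported at the closed point (where $M_{\fp}=\widehat{R_{\fp}}\otimes_{R_{\fp}}M_{\fp}$ and no comparison of injective dimensions across the completion is ever needed). Your route can be made to work, but as written the first step has a genuine gap. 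The inequality you actually need is $\injdim_{R_{\fp}}M_{\fp}\le\injdim_{\widehat{R_{\fp}}}(\widehat{R_{\fp}}\otimes_{R_{\fp}}M_{\fp})$, and your justification fails on both counts. First, $E^{\bullet}(M_{\fp})\otimes_{R_{\fp}}\widehat{R_{\fp}}$ is in general \emph{not} a complex of injective $\widehat{R_{\fp}}$-modules: for a non-maximal prime $\fq R_{\fp}$ the module $E_{R_{\fp}}(R_{\fp}/\fq R_{\fp})\otimes_{R_{\fp}}\widehat{R_{\fp}}$ need not be injective because the formal fibers of $R_{\fp}\to\widehat{R_{\fp}}$ at non-maximal primes have positive dimension; e.g.\ for $\fq=0$ and $\dim R_{\fp}\ge 2$ this module is the generic formal fiber, a localization of $\widehat{R_{\fp}}$ of positive dimension, which is not even divisible. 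Second, your ``equality holds'' claim rests on detecting injective dimension at the closed point, which is false for non-finitely-generated modules ($E(R/\fq)$ with $\fq$ non-maximal has injective dimension $0$ witnessed only at $\fq$ -- indeed the whole paper is about which primes witness the last Bass numbers). The correct repair is standard but different from what you wrote: by Baer's criterion $\injdim_{R_{\fp}}M_{\fp}\le t$ if and only if $\Ext^{i}_{R_{\fp}}(R_{\fp}/I,M_{\fp})=0$ for all ideals $I$ and all $i>t$; since $R_{\fp}/I$ is finitely presented and $\widehat{R_{\fp}}$ is faithfully flat, $\Ext^{i}_{R_{\fp}}(R_{\fp}/I,M_{\fp})\otimes_{R_{\fp}}\widehat{R_{\fp}}\cong\Ext^{i}_{\widehat{R_{\fp}}}(\widehat{R_{\fp}}/I\widehat{R_{\fp}},\widehat{R_{\fp}}\otimes M_{\fp})$, and vanishing upstairs forces vanishing downstairs. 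This gives exactly the descent inequality you need, with no claim about resolutions remaining injective.

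Your middle step is essentially the paper's Lemma \ref{completion is a D-module}, and there too your sketch points in the wrong direction: extending operators continuously from $R_{\fp}$ to $\widehat{R_{\fp}}$ does not by itself let $D(\widehat{R_{\fp}},K)$ act on $\widehat{R_{\fp}}\otimes_{R_{\fp}}M_{\fp}$. What is needed is the reverse: derivations $\delta_i$ on $R_{\fp}$ (hence acting on the $D(R_{\fp},k)$-module $M_{\fp}$) that induce the $\partial_i$ on $\widehat{R_{\fp}}$, together with the characteristic-zero fact that $D(\widehat{R_{\fp}},K)$ is generated over $\widehat{R_{\fp}}$ by $\partial_1,\dots,\partial_c$; then the Leibniz formula defines the action on the tensor product. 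You flag this as the main obstacle, which is fair, but note that the difficulty is lifting the generators downstairs, not extending operators upstairs. With that lemma in hand and the Ext-descent above, your argument closes, and is arguably more direct than the paper's induction; the support-dimension comparison you assert is fine, since $\dim\widehat{R_{\fp}}/\fq\widehat{R_{\fp}}=\dim R_{\fp}/\fq R_{\fp}$ and supports are stable under specialization.
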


\begin{proof}[Proof that Proposition \ref{inj dim bound for localization} implies Theorem \ref{inj dim bound for arbitrary localization}]
Let $M$ and $S \subseteq R$ be given, and let $t$ be the injective dimension $\injdim_{S^{-1}R} S^{-1}M$. Since $E^t_{S^{-1}R}(S^{-1}M) \neq 0$, there exists a prime ideal $S^{-1} \fp \subseteq S^{-1}R$ such that $\mu^t(S^{-1}\fp, S^{-1}M) > 0$. Since $S^{-1} \fp$ belongs to the support of $E^t_{S^{-1}R}(S^{-1}M)$, if we localize the complex $E^{\bullet}_{S^{-1}R}(S^{-1}M)$ at $S^{-1}\fp$, its length remains the same. But this new complex is the minimal injective resolution of $(S^{-1}M)_{S^{-1} \fp} = M_{\fp}$ as an $R_{\fp}$-module, so we have
\[
t = \injdim_{S^{-1}R} S^{-1}M = \injdim_{R_{\fp}} M_{\fp} \leq \Supp_{R_{\fp}} M_{\fp} \leq \dim \Supp_{S^{-1}R} S^{-1}M,
\]
where the first inequality holds since we have assumed Proposition \ref{inj dim bound for localization}. This completes the proof.
\end{proof}

The proof of Proposition \ref{inj dim bound for localization} below proceeds similarly to that of \cite[Theorem 3.4(b)]{LyubeznikFinitenessLocalCohomology}, an analogous statement for local cohomology modules over more general rings. We first need a couple of lemmas.

\begin{lem}\label{completion is a D-module}
Let $\fp$ be a prime ideal of $R$. Then the $\fp R_{\fp}$-adic completion $\widehat{R_{\fp}}$ of $R_{\fp}$ is isomorphic to a formal power series ring $K[[z_1, \ldots, z_c]]$ where $K$ is a field of characteristic zero and $c = \hgt \fp$, and the $\widehat{R_{\fp}}$-module $\widehat{R_{\fp}} \otimes_{R_{\fp}} M_{\fp}$ is in fact a $D(\widehat{R_{\fp}}, K)$-module.
\end{lem}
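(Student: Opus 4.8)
The plan is to establish two separate claims: first, the structural identification of $\widehat{R_{\fp}}$ as a formal power series ring over a characteristic-zero field; and second, that the differential operator structure on $M_{\fp}$ survives completion. For the first claim, the natural tool is the Cohen structure theorem. Since $R_{\fp}$ is a regular local ring (being a localization of the regular ring $R$) of dimension $c = \hgt\fp$, its completion $\widehat{R_{\fp}}$ is a complete regular local ring of dimension $c$. Because $R$ contains $\mathbb{Q}$, so does $\widehat{R_{\fp}}$; in particular $\widehat{R_{\fp}}$ is equicharacteristic, so by Cohen's theorem it contains a coefficient field $K$ (necessarily of characteristic zero), and being complete regular local of dimension $c$ it is isomorphic to $K[[z_1,\ldots,z_c]]$. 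This part is standard commutative algebra and I would simply cite it.

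The substantive part is the second claim: promoting the $D(R,k)$-action on $M$ first to a $D(R_{\fp},k)$-action on $M_{\fp}$, then to a $D(\widehat{R_{\fp}}, K)$-action on $\widehat{R_{\fp}}\otimes_{R_{\fp}} M_{\fp}$. The first step is already provided by Proposition \ref{D-modules omnibus}(b): localization of a $D$-module is a $D(S^{-1}R, k)$-module. For the completion step, the strategy I would follow is to show that each $k$-linear differential operator $d$ on $R_{\fp}$ (equivalently, each element of $D(R_{\fp},k)$, which as a left $R_{\fp}$-module is generated by monomials in $\partial_1,\ldots,\partial_n$ with the $x_i$ inverted as needed) is continuous for the $\fp R_{\fp}$-adic topology, hence extends uniquely to a continuous $k$-linear operator $\widehat{d}$ on $\widehat{R_{\fp}}$; one checks by induction on the order of $d$ that $\widehat{d}$ is again a differential operator, and that these extensions are compatible with composition, so $D(R_{\fp},k)$ maps to $D(\widehat{R_{\fp}}, k) \subseteq D(\widehat{R_{\fp}}, K)$. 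Continuity follows because $d$ sends $(\fp R_{\fp})^{m}$ into $(\fp R_{\fp})^{m - j}$ for $d$ of order $\le j$: a derivation drops the filtration by at most one, and the Leibniz-type rule $[d,r]$ having lower order lets one run the induction. Then, given the action of $D(R_{\fp},k)$ on $M_{\fp}$ and the compatible action on $\widehat{R_{\fp}} = \widehat{R_{\fp}}\otimes_{R_{\fp}} R_{\fp}$, one equips $\widehat{R_{\fp}}\otimes_{R_{\fp}} M_{\fp}$ with the diagonal-type action: a differential operator acts via its extension on the left tensor factor combined with its original action on $M_{\fp}$, using the recursive characterization of $D$ to verify the order bounds are preserved. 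Finally, one notes $D(\widehat{R_{\fp}},K) = D(\widehat{R_{\fp}}, k)$ in this situation because $\widehat{R_{\fp}} \cong K[[z_1,\ldots,z_c]]$ and differential operators over such a ring are the same whether taken $k$-linearly or $K$-linearly (the $\partial/\partial z_i$ already kill $K$), so the $D(\widehat{R_{\fp}},k)$-structure we have built is in fact a $D(\widehat{R_{\fp}},K)$-structure.

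The main obstacle I anticipate is the careful bookkeeping in showing that the completed operators remain differential operators of the correct order and that the tensor-product construction is well-defined and associative over the noncommutative ring $D$ — in particular, verifying that the formula for the action on $\widehat{R_{\fp}}\otimes_{R_{\fp}} M_{\fp}$ respects the relations $[d,r]$ of the differential operator ring. An alternative, cleaner route that sidesteps some of this is to recall that $\widehat{R_{\fp}}\otimes_{R_{\fp}} M_{\fp}$ can be realized as a local cohomology module of $\widehat{R_{\fp}}$ (when $M_{\fp}$ itself is, e.g.\ built from local cohomology of $R_{\fp}$) via Proposition \ref{local coh omnibus}(a), and invoke Proposition \ref{local coh omnibus}(b) directly; but since the lemma is stated for an arbitrary $D$-module $M$, I would give the general argument via extension of operators as above, which is the approach implicit in \cite[Theorem 3.4(b)]{LyubeznikFinitenessLocalCohomology}.
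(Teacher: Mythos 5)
The Cohen-structure half matches the paper, and your observation that an order $\leq j$ operator maps $(\fp R_{\fp})^m$ into $(\fp R_{\fp})^{m-j}$, hence extends continuously to the completion, is correct. The genuine gap is in how you endow $\widehat{R_{\fp}}\otimes_{R_{\fp}} M_{\fp}$ with its module structure. A ``diagonal-type action,'' in which an arbitrary differential operator acts by its extension on the left factor ``combined with'' its original action on $M_{\fp}$, is not a definition for operators of order $\geq 2$: there is no canonical way to make a higher-order operator act on a tensor product (already $\delta^2$ must act as $\widehat{\delta}^2\otimes 1+2\widehat{\delta}\otimes\delta+1\otimes\delta^2$, and for a general second-order operator no such formula is available), and nothing in your outline produces one. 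The workable mechanism, and the one the paper uses, is to reduce to derivations: since $K$ has characteristic zero, $D(\widehat{R_{\fp}},K)$ is generated over $\widehat{R_{\fp}}$ by $\partial_1=\partial/\partial z_1,\ldots,\partial_c=\partial/\partial z_c$ (it is free on monomials in these), so it suffices to define actions of the $\partial_i$ satisfying $[\partial_i,\hat{r}]=\partial_i(\hat{r})$. For that one needs the crucial input you never secure: derivations $\delta_i\colon R_{\fp}\to R_{\fp}$ (hence elements of $D(R_{\fp},k)$ acting on $M_{\fp}$) which induce the coordinate derivations $\partial_i$ on $\widehat{R_{\fp}}$; this is exactly the content the paper cites from Lyubeznik, and it is what makes the connection formula $\partial_i\cdot(\hat{s}\otimes\mu)=\partial_i(\hat{s})\otimes\mu+\hat{s}\otimes\delta_i(\mu)$ well defined over $R_{\fp}$. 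Extending the old operators (or the old partials $\partial/\partial x_j$) from $R_{\fp}$ to $\widehat{R_{\fp}}$ goes in the wrong direction: it does not show that the new coordinate derivations $\partial/\partial z_i$, which are the generators you actually need, arise from $R_{\fp}$ at all.

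A smaller problem: your closing claim that $D(\widehat{R_{\fp}},k)=D(\widehat{R_{\fp}},K)$ ``because the $\partial/\partial z_i$ kill $K$'' is false in general. When $K$ is transcendental over $k$ (which happens here whenever $\fp$ is not maximal), any nonzero $k$-derivation of $K$ extends coefficientwise to a $k$-linear, non-$K$-linear derivation of $K[[z_1,\ldots,z_c]]$, so the inclusion $D(\widehat{R_{\fp}},K)\subseteq D(\widehat{R_{\fp}},k)$ is strict. Fortunately only that (true) inclusion is needed: once $K$ is chosen to contain $k$, a $D(\widehat{R_{\fp}},k)$-module restricts to a $D(\widehat{R_{\fp}},K)$-module. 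But as written your justification is not correct, and in any case the tensor-product construction in the previous paragraph is where the proof has to be repaired, along the lines of the paper's outline.
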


\begin{proof}
The statement about the form of the ring $\widehat{R_{\fp}}$ is simply Cohen's structure theorem, since $R_{\fp}$ is a regular local ring. The second statement is essentially included in the proof of \cite[Corollary 8]{LyubeznikCharFreeFiniteness} (see also the proof of \cite[Theorem 2.4]{LyubeznikFinitenessLocalCohomology}), so we omit most details, contenting ourselves with the following outline. There exist derivations $\delta_i: R_{\fp} \rightarrow R_{\fp}$ for $1 \leq i \leq c$ such that, upon passing to the completion, each $\delta_i$ induces the $K$-linear derivation $\partial_i = \frac{\partial}{\partial z_i}$ on $\widehat{R_{\fp}}$. We define the $D(\widehat{R_{\fp}}, K)$-module structure on $\widehat{R_{\fp}} \otimes_{R_{\fp}} M_{\fp}$ as follows: if $\hat{r}, \hat{s} \in \widehat{R_{\fp}}$ and $\mu \in M_{\fp}$, then $\hat{r} \cdot (\hat{s} \otimes \mu) = \hat{r}\hat{s} \otimes \mu$ and $\partial_i \cdot (\hat{s} \otimes \mu) = \partial_i(\hat{s}) \otimes \mu + \hat{s} \otimes \delta_i(\mu)$. It is easy to see that, for $1 \leq i \leq c$ and all $\hat{r} \in R_{\fp}$, the actions of $\partial_i \hat{r} - \hat{r} \partial_i$ and $\partial_i(\hat{r})$ on $\hat{s} \otimes \mu$ are the same.
\end{proof}

\begin{lem}\label{localized local coh is injective}
For all prime ideals $\fp$ of $R$ and all $i \geq 0$, the $R_{\fp}$-module $(H^i_{\fp}(M))_{\fp}$ is injective.
\end{lem}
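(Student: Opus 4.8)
The plan is to reduce to a situation where Lyubeznik's Lemma \ref{Bass numbers of local coh} applies, and for that the key input is precisely that the ``new'' localized local cohomology is injective over the completion. First I would invoke Lemma \ref{completion is a D-module}: writing $\widehat{R_{\fp}} \cong K[[z_1, \ldots, z_c]]$ with $c = \hgt \fp$, the module $\widehat{R_{\fp}} \otimes_{R_{\fp}} M_{\fp}$ carries the structure of a $D(\widehat{R_{\fp}}, K)$-module. Since $\widehat{R_{\fp}}$ is again a formal power series ring over a characteristic-zero field, Proposition \ref{D-modules omnibus}(b) tells us that $(H^i_{\fp}(M))_{\fp}$, being local cohomology of $M_{\fp}$ at $\fp R_{\fp}$, is a $D(R_{\fp}, k)$-module, and after completion each $(H^i_{\fp}(M))_{\fp} \otimes_{R_{\fp}} \widehat{R_{\fp}}$ is a $D(\widehat{R_{\fp}}, K)$-module; by Proposition \ref{local coh omnibus}(a) this completed module is itself a local cohomology module, so Proposition \ref{D-modules omnibus}(a) applies to it over $\widehat{R_{\fp}}$.

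Next I would observe that $\fp R_{\fp}$ is the maximal ideal of $R_{\fp}$, so $H^i_{\fp}(M)_{\fp} = H^i_{\fp R_{\fp}}(M_{\fp})$ is $\fp R_{\fp}$-torsion, hence $\Supp_{R_{\fp}} H^i_{\fp R_{\fp}}(M_{\fp}) \subseteq \{\fp R_{\fp}\}$, which is zero-dimensional. The same holds after completion: $\Supp_{\widehat{R_{\fp}}}\bigl(\widehat{R_{\fp}} \otimes_{R_{\fp}} H^i_{\fp R_{\fp}}(M_{\fp})\bigr) \subseteq \{\fp \widehat{R_{\fp}}\}$. Applying Proposition \ref{D-modules omnibus}(a) over $\widehat{R_{\fp}}$ to this completed module — which we just argued is a $D(\widehat{R_{\fp}}, K)$-module — gives $\injdim_{\widehat{R_{\fp}}}\bigl(\widehat{R_{\fp}} \otimes_{R_{\fp}} H^i_{\fp R_{\fp}}(M_{\fp})\bigr) \leq 0$, i.e.\ it is an injective $\widehat{R_{\fp}}$-module. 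Since $H^i_{\fp R_{\fp}}(M_{\fp})$ is a module over the local ring $R_{\fp}$ that is supported only at the maximal ideal, it is $\fp R_{\fp}$-torsion, so it is injective over $R_{\fp}$ if and only if its completion is injective over $\widehat{R_{\fp}}$ (a module supported at the maximal ideal of a local ring is injective iff it becomes injective after completion — one can see this via Matlis duality, since such a module is injective iff it is a direct sum of copies of $E(R_{\fp}/\fp R_{\fp})$, and $E$ is preserved under the faithfully flat completion). This yields that $(H^i_{\fp}(M))_{\fp}$ is injective over $R_{\fp}$, as desired.

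The main obstacle is the descent of injectivity from $\widehat{R_{\fp}}$ back to $R_{\fp}$: one must be careful that injectivity is not in general preserved under faithfully flat descent, so the argument genuinely uses that the module in question is supported only at the maximal ideal. I expect the cleanest way to phrase this is via Matlis duality over $R_{\fp}$, together with the compatibility $E_{R_{\fp}}(R_{\fp}/\fp R_{\fp}) \cong E_{\widehat{R_{\fp}}}(\widehat{R_{\fp}}/\fp\widehat{R_{\fp}})$ and the fact that an Artinian-cosupported module over a complete local ring is injective iff its Matlis dual is free, which is faithfully-flat-stable. A secondary, more bookkeeping-level point is checking that the $D$-module structure of $H^i_{\fp}(M)_{\fp}$ really does complete to the $D(\widehat{R_{\fp}},K)$-module structure furnished by Lemma \ref{completion is a D-module} applied to $H^i_{\fp}(M)$ in place of $M$ — but this is immediate from the construction there, which is functorial in the $D$-module.
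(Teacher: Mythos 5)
Your proposal is correct and follows essentially the same route as the paper: pass to the $\fp R_{\fp}$-adic completion, use Lemma \ref{completion is a D-module} together with flat base change for local cohomology to identify the completed module with a $D(\widehat{R_{\fp}},K)$-module supported only at the maximal ideal, and apply Proposition \ref{D-modules omnibus}(a) over the power series ring $\widehat{R_{\fp}}$ to conclude injectivity there. The only (minor) difference is the final descent step, which you flag as the main obstacle: the paper simply observes that the $\fp R_{\fp}$-torsion module is already an $\widehat{R_{\fp}}$-module and that restriction of scalars along the flat map $R_{\fp} \rightarrow \widehat{R_{\fp}}$ preserves injectivity, so the direction actually needed is elementary and no Matlis duality or faithfully flat descent argument is required.
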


\begin{proof}
First recall that $(H^i_{\fp}(M))_{\fp} \cong H^i_{\fp R_{\fp}}(M_{\fp})$ as $R_{\fp}$-modules. Since $H^i_{\fp R_{\fp}}(M_{\fp})$ is supported only at $\fp R_{\fp}$, every element of this module is annihilated by some power of $\fp R_{\fp}$, and therefore $H^i_{\fp R_{\fp}}(M_{\fp})$ is an $\widehat{R_{\fp}}$-module, where $\widehat{R_{\fp}}$ is the $\fp R_{\fp}$-adic completion of $R_{\fp}$: this $\widehat{R_{\fp}}$-module may be identified with $\widehat{R_{\fp}} \otimes_{R_{\fp}} H^i_{\fp R_{\fp}}(M_{\fp})$. The extension $R_{\fp} \rightarrow \widehat{R_{\fp}}$ is flat, so since local cohomology commutes with flat base change, we have
\[
\widehat{R_{\fp}} \otimes_{R_{\fp}} H^i_{\fp R_{\fp}}(M_{\fp}) \cong H^i_{\fp \widehat{R_{\fp}}}(\widehat{R_{\fp}} \otimes_{R_{\fp}} M_{\fp})
\]
as $\widehat{R_{\fp}}$-modules. As in Lemma \ref{completion is a D-module}, $\widehat{R_{\fp}} \cong K[[z_1, \ldots, z_c]]$ where $K$ is a field of characteristic zero. By that lemma, $\widehat{R_{\fp}} \otimes_{R_{\fp}} M_{\fp}$, and hence $H^i_{\fp \widehat{R_{\fp}}}(\widehat{R_{\fp}} \otimes_{R_{\fp}} M_{\fp})$, is a $D(\widehat{R_{\fp}}, K)$-module. Since $\widehat{R_{\fp}}$ is a formal power series ring over $K$, we have
\[
\injdim_{\widehat{R_{\fp}}} H^i_{\fp \widehat{R_{\fp}}}(\widehat{R_{\fp}} \otimes_{R_{\fp}} M_{\fp}) \leq \dim \Supp_{\widehat{R_{\fp}}} H^i_{\fp \widehat{R_{\fp}}}(\widehat{R_{\fp}} \otimes_{R_{\fp}} M_{\fp}) = 0,
\]
where the inequality is Proposition \ref{D-modules omnibus}(a) and the equality holds because $H^i_{\fp \widehat{R_{\fp}}}(\widehat{R_{\fp}} \otimes_{R_{\fp}} M_{\fp})$ is supported only at the maximal ideal $\fp \widehat{R_{\fp}}$. Therefore $H^i_{\fp \widehat{R_{\fp}}}(\widehat{R_{\fp}} \otimes_{R_{\fp}} M_{\fp})$, which we have identified with $H^i_{\fp R_{\fp}}(M_{\fp})$, is injective as an $\widehat{R_{\fp}}$-module; since $R_{\fp} \rightarrow \widehat{R_{\fp}}$ is flat, $H^i_{\fp R_{\fp}}(M_{\fp})$ is injective over $R_{\fp}$ as well, completing the proof.
\end{proof}

\begin{proof}[Proof of Proposition \ref{inj dim bound for localization}]
We proceed by induction on $\dim \Supp_{R_{\fp}} M_{\fp}$. If $\fp$ is a minimal prime of $M$, then this dimension is zero and we must show that $M_{\fp}$ is injective as an $R_{\fp}$-module. Since $\fp$ is minimal in $\Supp_R M$, every element of $M_{\fp}$ is annihilated by some power of $\fp R_{\fp}$, and so $M_{\fp}$ is a module over the $\fp R_{\fp}$-adic completion $\widehat{R_{\fp}}$ of $R_{\fp}$: this module may be identified with $\widehat{R_{\fp}} \otimes_{R_{\fp}} M_{\fp}$. By the same reasoning used in the proof of Lemma \ref{localized local coh is injective}, $M_{\fp}$ is injective over $\widehat{R_{\fp}}$ and therefore over $R_{\fp}$. Now suppose that $\dim \Supp_{R_{\fp}} M_{\fp} > 0$. Let $E^{\bullet}(M_{\fp})$ denote the minimal injective resolution of $M_{\fp}$ as an $R_{\fp}$-module. By the inductive hypothesis, if $\fq \subset \fp$ and $\fq \in \Supp_R M$, we have
\[
\injdim_{R_{\fq}} M_{\fq} \leq \dim \Supp_{R_{\fq}} M_{\fq} < \dim \Supp_{R_{\fp}} M_{\fp},
\]
so if $i \geq \dim \Supp_{R_{\fp}} M_{\fp}$, $E^i(M_{\fp})$ is supported only at $\fp R_{\fp}$. By Lemma \ref{localized local coh is injective}, $(H^i_{\fp}(M))_{\fp}$ is an injective $R_{\fp}$-module for all $i \geq 0$. By Lemma \ref{Bass numbers of local coh}(a), the differentials
\[
E^i(M_{\fp}) = \Gamma_{\fp R_{\fp}}(E^i(M_{\fp}))\rightarrow \Gamma_{\fp R_{\fp}}(E^{i+1}(M_{\fp})) = E^{i+1}(M_{\fp})
\]
are zero for all $i \geq \dim \Supp_{R_{\fp}} M_{\fp}$. By the minimality of the resolution, $E^i(M_{\fp})$ itself is zero for all such $i$, completing the proof.
\end{proof}

\section{Pseudo-Jacobson rings}\label{Pseudo-Jacobson rings}

Recall that a ring $R$ is said to be \emph{Jacobson} if every prime ideal of $R$ is equal to the intersection of the maximal ideals containing it, and that if $R$ is a Jacobson ring, so also is every quotient $R/I$ of $R$. It is not hard to see from this that every non-maximal prime ideal of $R$ must be contained in \emph{infinitely many} distinct maximal ideals. It is this weaker statement that will be important for us; hence we make the following definition.

\begin{definition}\label{pseudo-Jacobson ring}
A commutative ring $R$ is called \emph{pseudo-Jacobson} if every non-maximal prime ideal $\fp$ of $R$ is contained in infinitely many distinct maximal ideals.
\end{definition}

Pseudo-Jacobson rings will arise for us in the following way: if $R$ is a regular local ring and $f$ is a non-unit of $R$, then unless $R$ is of very small dimension, the localization $R_f$ is pseudo-Jacobson. This follows from Proposition \ref{localization is pseudo-Jacobson}(a) below; the next preliminary results are given with this result in mind. 

\begin{lem}\label{intersection of height 1 primes}
Let $(R, \fm)$ be a local domain of dimension $d \geq 2$. Then
\[
\bigcap_{\substack{\fp \subseteq R \, \mathrm{prime}\\{\hgt \fp = 1}}} \fp = 0.
\]
\end{lem}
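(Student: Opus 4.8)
The plan is to show that the intersection $J=\bigcap_{\hgt\fp=1}\fp$ is contained in every prime of $R$, hence lies in the nilradical, which is zero since $R$ is a domain. By prime avoidance-type reasoning it suffices to show that every nonzero element $x\in\fm$ lies in some height-one prime, equivalently that $x$ is contained in a prime $\fp$ with $\hgt\fp=1$; indeed if $J\neq 0$, pick $0\neq x\in J$, and I would instead argue that $J$ being nonzero forces a contradiction with the existence of enough height-one primes.

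Concretely, first I would reduce to the case where $R$ is moreover a normal domain, or at least invoke Krull's principal ideal theorem directly: given $0\neq x\in\fm$, the ideal $(x)$ is a proper nonzero ideal, so it has a minimal prime $\fp$ over it, and by the principal ideal theorem $\hgt\fp\le 1$; since $x\neq 0$ and $R$ is a domain, $\fp\neq 0$, so $\hgt\fp=1$. Thus $x\notin J$ as soon as there is a height-one prime \emph{not} containing $x$ — but this is not quite what I need. The cleaner route: suppose $0\neq x\in J$. Since $\dim R=d\ge 2$, the quotient $R/(x)$ has dimension $d-1\ge 1$, so $\Spec(R/(x))$ contains a prime strictly between the image of the minimal prime over $(x)$ and $\fm$; pulling back, there is a prime $\fq$ with $\hgt\fq\ge 2$. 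Now I want a height-one prime avoiding $x$.

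The key step, and the one I expect to be the main obstacle, is producing a single height-one prime $\fp$ with $x\notin\fp$. For this I would pass to the normalization $\overline{R}$ (module-finite over $R$ if $R$ is excellent, which a regular — and more generally Nagata — local ring is; but the lemma as stated only assumes $R$ local, so I would instead argue without normalization). A cleaner argument avoiding normalization: pick any $0\neq y\in\fm$ with $y$ not a unit times a power of $x$ — possible since $\dim R\ge 2$ means $\fm$ is not the radical of a principal ideal, so in particular $x$ and some $y$ generate an ideal of height $2$. Then a minimal prime $\fp$ over $(y)$ has height one, and I claim $x\notin\fp$: if $x\in\fp$ then $(x,y)\subseteq\fp$, so $\hgt(x,y)\le\hgt\fp=1$, contradicting $\hgt(x,y)=2$. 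Hence $x\notin\fp$ while $\hgt\fp=1$, so $x\notin J$, contradicting $0\neq x\in J$.

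The only remaining gap is justifying that in a local domain of dimension $d\ge 2$ there exist $x,y\in\fm$ with $\hgt(x,y)=2$: given any $0\neq x\in\fm$, each minimal prime $\fp_i$ over $(x)$ has height one, hence is strictly smaller than $\fm$ (as $d\ge 2$), so $\fm\neq\bigcup_i\fp_i$ by prime avoidance, and any $y\in\fm\setminus\bigcup_i\fp_i$ satisfies: every minimal prime over $(x,y)$ contains some $\fp_i$ properly, hence has height $\ge 2$, and by the principal ideal theorem (Krull height theorem) has height $\le 2$; so $\hgt(x,y)=2$. Running this for an arbitrary nonzero $x\in J$ yields the contradiction and proves $J=0$. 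I would present the argument in exactly this order: reduce to showing no nonzero $x$ lies in all height-one primes; given such $x$, use prime avoidance to find $y$; form a minimal prime over $(y)$; check it has height one and misses $x$.
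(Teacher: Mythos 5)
Your argument is correct: given a nonzero $x$ in the intersection, the minimal primes $\fp_1,\ldots,\fp_r$ of $(x)$ have height one (Krull plus the domain hypothesis), prime avoidance against $\fm$ (using $d\ge 2$) produces $y\in\fm\setminus\bigcup_i\fp_i$, every minimal prime of $(x,y)$ properly contains some $\fp_i$ and so $\hgt(x,y)=2$, and then a minimal prime of $(y)$ is a height-one prime that cannot contain $x$ --- contradicting the choice of $x$. This uses exactly the same toolkit as the paper (principal ideal theorem, finiteness of minimal primes of a principal ideal, prime avoidance), but the argument is arranged differently: the paper observes that if $f\neq 0$ lies in every height-one prime, then \emph{every} height-one prime is a minimal prime of $(f)$, so there are only finitely many height-one primes in total; since each element of $\fm$ lies in some height-one prime, $\fm$ is covered by finitely many of them and prime avoidance forces $\fm$ into one, contradicting $\dim R\ge 2$. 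Your version instead constructs an explicit height-one prime missing $x$, and in fact you could shortcut the $\hgt(x,y)=2$ step: a height-one prime $\fp\ni x$ would have to equal one of the $\fp_i$, which is impossible since $y\in\fp$. The two proofs are of comparable length; the paper's gets the extra (unused) information that there are only finitely many height-one primes, while yours is more constructive. The preliminary detours in your write-up (normalization, the auxiliary prime $\fq$ of height $\ge 2$ obtained from $R/(x)$) are never used and could be deleted without loss.
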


\begin{proof}
Suppose otherwise, and let $f \neq 0$ belong to the displayed intersection. Then every height $1$ prime ideal $\fp$ is a minimal prime of $f$. Since $R$ is noetherian, there are only finitely many such minimal primes. By Krull's principal ideal theorem, $\fm$ is contained in the union of all height $1$ prime ideals. Since there are only finitely many such, prime avoidance implies that $\fm$ is contained in a height $1$ prime ideal, a contradiction since $\dim R > 1$.
\end{proof}

\begin{prop}\label{intersection of height d-1 primes}
Let $(R, \fm)$ be a catenary local domain of dimension $d > 0$. Let $t$ be an integer such that $0\leq t\leq d-1$. Then
\[
\bigcap_{\substack{\fp \subseteq R \, \mathrm{prime}\\{\hgt \fp = t}}} \fp = 0.
\]
\end{prop}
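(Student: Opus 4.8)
The plan is to induct on $t$. First I would dispose of the base case $t=0$: in a domain the only prime of height $0$ is the zero ideal, so the displayed intersection is automatically $(0)$. For the inductive step I would fix an integer $t$ with $1 \le t \le d-1$ and assume the proposition for the value $t-1$, over \emph{every} catenary local domain of dimension $> t-1$ (when $t=1$ this is just the base case already proved).

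Then, arguing by contradiction, I would take a nonzero $f$ lying in every height-$t$ prime of $R$ and manufacture a height-$t$ prime avoiding $f$. The first move is to get a ``seed'' prime: since $d \ge t+1 \ge 2$, Lemma \ref{intersection of height 1 primes} gives a height-$1$ prime $\fq$ with $f \notin \fq$. The quotient $R/\fq$ is again a catenary local domain, and the coheight formula $\hgt \fs + \dim(R/\fs) = \dim R$ valid in catenary local domains shows $\dim(R/\fq) = d-1 \ge t$, so the inductive hypothesis applies to $R/\fq$ with the integer $t-1$: there is a prime $\fp' \subseteq R/\fq$ of height $t-1$ not containing the (nonzero) image of $f$. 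Pulling $\fp'$ back to a prime $\fp \subseteq R$, I would then have $\fq \subseteq \fp$, $f \notin \fp$, and, computing via the coheight formula in $R$ and in $R/\fq$,
\[
\hgt \fp = \dim R - \dim(R/\fp) = d - \bigl(\dim(R/\fq) - \hgt(\fp/\fq)\bigr) = d - \bigl((d-1)-(t-1)\bigr) = t,
\]
so $\fp$ is the desired height-$t$ prime missing $f$ — a contradiction, completing the induction.

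The only genuine input is Lemma \ref{intersection of height 1 primes}; the rest is elementary height bookkeeping, and the catenary hypothesis is used exactly in computing $\hgt \fp$ from $\hgt \fq$ and $\hgt(\fp/\fq)$. I do not expect a real obstacle here: the points that must be handled with care are that the induction runs on the integer $t$ while the ambient ring changes (one invokes the hypothesis for $R/\fq$, not for $R$), and that the coheight formula $\hgt \fs + \dim(R/\fs) = \dim R$ — which holds for catenary local domains but can fail without the catenary assumption — is quoted correctly.
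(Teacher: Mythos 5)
Your proof is correct and is essentially the paper's argument in contrapositive form: both reduce modulo a height-one prime supplied by Lemma \ref{intersection of height 1 primes}, use catenarity to identify height-$(t-1)$ primes of the quotient with height-$t$ primes of $R$ containing it, and run the same one-step induction (the paper phrases it as induction on $d$ and takes intersections over all height-one primes, you phrase it as induction on $t$ and pick a single height-one prime avoiding $f$). There is no gap; your explicit appeal to the coheight formula for catenary local domains is exactly the point where the paper implicitly uses the catenary hypothesis.
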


\begin{proof}
If $d=1$, then the only possible value for $t$ is 0 and our conclusion is clear. We will proceed by induction on $d$. When $d=2$,  our conclusion is clear from Lemma \ref{intersection of height 1 primes}. Now suppose that $d \geq 3$ and fix a height $1$ prime ideal $\fp \subseteq R$. By Lemma \ref{intersection of height 1 primes}, we may assume that $t\geq 2$. Since $R$ is catenary, the height $t-1$ prime ideals of $R/\fp$ are precisely the height $t$ prime ideals of $R$ containing $\fp$. Therefore, the inductive hypothesis applied to the $(d-1)$-dimensional ring $R/\fp$ shows that $\fp$ is the intersection of all height $t$ prime ideals of $R$ containing $\fp$. Taking the intersection over all height $1$ prime ideals $\fp$ (which is $0$ by Lemma \ref{intersection of height 1 primes}), we conclude the proof.
\end{proof}

\begin{prop}\label{localization is pseudo-Jacobson}
Let $(R, \fm)$ be a catenary local domain of dimension $d \geq 2$, and let $f \in \fm$ be a nonzero element.
\begin{enumerate}[(a)]
\item The ring $R_f$ is pseudo-Jacobson.
\item Every maximal ideal of $R_f$ has height $d-1$.
\end{enumerate}
\end{prop}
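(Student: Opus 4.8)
The plan is to deduce both parts from Proposition \ref{intersection of height d-1 primes} applied to $R$ (which is catenary local of dimension $d \geq 2 > 0$). First I would record the dictionary between ideals of $R_f$ and $f$-saturated ideals of $R$: prime ideals of $R_f$ correspond to prime ideals $\fq \subseteq R$ with $f \notin \fq$, and this correspondence preserves inclusions and (since $R$ is catenary, hence every localization behaves well) heights. Under this correspondence, a prime $\fq R_f$ is maximal in $R_f$ if and only if $\fq$ is maximal among primes of $R$ not containing $f$.

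For part (b), let $\fq R_f$ be a maximal ideal of $R_f$, so $\fq$ is maximal among primes of $R$ avoiding $f$; in particular $\fq \neq \fm$ since $f \in \fm$, so $\hgt \fq \leq d-1$. Suppose for contradiction that $\hgt \fq = t < d-1$. Then $0 \leq t \leq d-2 \leq d-1$, so Proposition \ref{intersection of height d-1 primes} is not quite what I want directly; instead I would argue inside $R/\fq$, which is a catenary local domain of dimension $d - t \geq 2$ (using catenarity and $\hgt \fq = t$). Apply Lemma \ref{intersection of height 1 primes} (or Proposition \ref{intersection of height d-1 primes} with the height-$1$ choice) to $R/\fq$: the intersection of the height-one primes of $R/\fq$ is zero, so the image of $f$ in $R/\fq$, being nonzero (as $f \notin \fq$), lies outside some height-one prime $\overline{\fp}$ of $R/\fq$. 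Pulling back, this is a prime $\fp \subsetneq R$ with $\fq \subsetneq \fp$, $f \notin \fp$, and $\fp \neq \fm$ — contradicting maximality of $\fq$ among primes avoiding $f$. Hence $\hgt \fq = d-1$.

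For part (a), let $\fq R_f$ be a non-maximal prime of $R_f$; by part (b), $\hgt \fq \leq d-2$, and $f \notin \fq$. Consider $R/\fq$, a catenary local domain of dimension $d - \hgt \fq \geq 2$. By Proposition \ref{intersection of height d-1 primes} applied to $R/\fq$ (with $t$ equal to any value in the allowed range, say the height that makes the primes maximal-but-one, or simply $t=1$), the intersection of the height-one primes of $R/\fq$ is zero; since the nonzero image $\bar f$ avoids at least one such prime, and in fact — this is the crux — $\bar f$ can avoid only finitely many of the height-one primes of $R/\fq$ (those are exactly the minimal primes of $\bar f$, finite in number by noetherianity), while there are infinitely many height-one primes in total (if there were only finitely many, their intersection being zero would force, by prime avoidance and Krull, the maximal ideal of $R/\fq$ to have height $\leq 1$, impossible since $\dim R/\fq \geq 2$). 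Thus infinitely many height-one primes $\overline{\fp}$ of $R/\fq$ satisfy $\bar f \notin \overline{\fp}$; each pulls back to a prime $\fp \subsetneq R$ with $\fq \subsetneq \fp$ and $f \notin \fp$, hence to a prime $\fp R_f \supsetneq \fq R_f$ of $R_f$, and these are infinitely many distinct primes. Extending each such $\fp$ to a maximal ideal of $R$ avoiding $f$ (equivalently, a maximal ideal of $R_f$) containing $\fq R_f$ — and checking the resulting maximal ideals remain infinite in number, e.g. because each has height $d-1$ by part (b) so can contain only finitely many of our height-$(\hgt\fq + 1)$ primes $\fp$ — shows $\fq R_f$ lies in infinitely many maximal ideals of $R_f$, as required.

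The main obstacle is the finiteness-versus-infiniteness bookkeeping in part (a): one must ensure that passing from "infinitely many primes $\fp$ strictly above $\fq$ and avoiding $f$" to "infinitely many \emph{maximal} ideals of $R_f$ above $\fq R_f$" does not collapse the infinitude. I expect the cleanest route is to fix the height: show there are infinitely many primes of height exactly $\hgt\fq + 1$ containing $\fq$ and avoiding $f$ (via Lemma \ref{intersection of height 1 primes} in $R/\fq$ plus the noetherian finiteness of minimal primes of $\bar f$), then use that each maximal ideal of $R_f$ has height $d-1$ and is catenary, so lies over only finitely many primes of any fixed smaller height — forcing infinitely many distinct maximal ideals above $\fq R_f$. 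The height-one reductions and the pull-back/push-forward along $R \to R_f$ are routine once the correspondence is set up.
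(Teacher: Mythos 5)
Your part (b) is correct and is in the same spirit as the paper's argument: pass to $R/\fq$, use the vanishing of the intersection of height-one primes (Lemma \ref{intersection of height 1 primes}) to find a prime strictly above $\fq$ avoiding $f$, contradicting maximality. The setup of the $\Spec R_f \leftrightarrow \{\fq \subseteq R : f \notin \fq\}$ correspondence and the dimension bookkeeping via catenarity are also fine.

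Part (a), however, has a genuine gap at precisely the step you single out as the ``main obstacle.'' You produce infinitely many primes $\fp$ of height $\hgt\fq+1$ containing $\fq$ and avoiding $f$, and then extend each to a maximal ideal of $R_f$, claiming the resulting maximal ideals stay infinite in number because a maximal ideal of $R_f$ ``has height $d-1$ \dots so can contain only finitely many of our height-$(\hgt\fq+1)$ primes $\fp$.'' That finiteness claim is false whenever $\hgt\fq+1 \leq d-2$: if $\fs$ is a height-$(d-1)$ prime of $R$ containing $\fq$ and avoiding $f$, then $(R/\fq)_{\fs/\fq}$ is a noetherian local domain of dimension $d-1-\hgt\fq \geq 2$, and such a ring has infinitely many height-one primes --- this is exactly the fact you yourself prove en route --- so $\fs$ contains infinitely many primes of height $\hgt\fq+1$ lying over $\fq$, and these can all avoid $f$ (e.g.\ $R=k[[x_1,\ldots,x_4]]$, $\fq=(0)$, $f=x_4$, $\fs=(x_1,x_2,x_3)$, with the primes $(x_1+cx_2)$, $c\in k$). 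Thus all of your $\fp$'s could a priori be swallowed by a single maximal ideal of $R_f$; your argument is complete only in the case $\hgt\fq=d-2$, where the $\fp$'s are already maximal in $R_f$. Nor can you simply rerun your height-one count at the top height: the height-$(\dim(R/\fq)-1)$ primes of $R/\fq$ containing $\bar f$ need not be minimal primes of $(\bar f)$ and may be infinite in number. This is exactly why the paper argues differently: it applies Proposition \ref{intersection of height d-1 primes} to $R/\fq$ with $t=\dim(R/\fq)-1$ and a product trick (if only finitely many such primes avoided $\bar f$, choose nonzero $g$ in their intersection; then $\bar f g$ lies in \emph{every} height-$t$ prime, contradicting the vanishing of the intersection), directly yielding infinitely many primes of $R$ of height exactly $d-1$ containing $\fq$ and avoiding $f$, which are already maximal in $R_f$, so no ``extension'' step is needed. (A repaired version of your height-one strategy does exist --- fix one height-one prime of $R/\fq$ avoiding $\bar f$, pass to the quotient, and induct on dimension --- but that is a different argument from the one you wrote.)
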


\begin{proof}
We claim first that there are infinitely many height $d-1$ prime ideals of $R$ that do not contain $f$. Suppose otherwise and let $\fp_1, \ldots, \fp_n$ be all the height $d-1$ prime ideals not containing $f$. Choose a nonzero $g \in \cap_{i=1}^n \fp_i$: since $R$ is a domain, it is enough to choose a nonzero element of each $\fp_i$ and let $g$ be their product. Then $fg$ belongs to every height $d-1$ prime ideal of $R$, a contradiction to Proposition \ref{intersection of height d-1 primes}.

Now let $\fp$ be a prime ideal of $R$ such that $\fp R_f$ is a non-maximal prime ideal of $R_f$. Since $\dim R_f = d-1$, the height of $\fp$ is at most $d-2$, so the quotient $R/\fp$ is a catenary local domain of dimension at least $2$. By the reasoning of the previous paragraph applied to $R/\fp$, there are infinitely many height $\dim(R/\fp) - 1$ prime ideals in $R/\fp$ that do not contain the image of $f$ (which is a nonzero element in the maximal ideal $\fm/\fp$), and these prime ideals correspond to infinitely many prime ideals of $R$ that contain $\fp$ but not $f$. Since $R$ is catenary, these prime ideals all have height $d-1$, and therefore correspond to maximal ideals in $R_f$. This proves part (a).

To prove part (b), suppose $\fp$ is a prime ideal of $R$ such that $f \notin \fp$ and $\hgt \fp \leq d-2$. Then as in the proof of part (a), the quotient $R/\fp$ has dimension at least $2$ and satisfies the hypotheses of Proposition \ref{intersection of height d-1 primes}, so $\fp$ is properly contained in infinitely many prime ideals of $R$ that do not contain $f$, and therefore $\fp R_f$ is not a maximal ideal of $R_f$. We conclude that all maximal ideals of $R_f$ must have height $d-1$, as claimed.
\end{proof}

\section{The last terms of minimal injective resolutions}\label{minimal resolutions}

In this section, we study minimal injective resolutions. Proposition \ref{inj hull is finite} below shows that the property of a module $M$ being of finite length as a $D$-module (resp. being $F$-finite) is inherited by the indecomposable summands of the last term of the minimal injective resolution of $M$. The following lemma is the key to proving both cases of this.

\begin{lem}\label{summands of top inj module}
Let $R$ be a noetherian domain and let $M$ be an $R$-module of finite injective dimension $t$. Suppose that for all prime ideals $\fp$ of $R$ and all $i \geq 0$, the local cohomology $R$-modules $H^i_{\fp}(M)$ have finitely many associated primes, and their localizations $(H^i_{\fp}(M))_{\fp}$ are injective $R_{\fp}$-modules. Then for all $\fp \in \Spec(R)$ such that $\mu^t(\fp, M) > 0$, there exists an ideal $J \subseteq R$ such that the quotient 
\[
N = H^t_{\fp}(M)/\Gamma_J(H^t_{\fp}(M))
\]
is isomorphic to a direct sum of copies of $E(R/\fp)$.
\end{lem}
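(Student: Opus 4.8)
The plan is to set $H = H^t_{\fp}(M)$ and to build the required ideal $J$ by discarding from $\Ass_R H$ all associated primes other than $\fp$ itself. Two structural features of $H$ will drive the argument. First, since $\injdim_R M = t$ we have $E^{t+1}(M) = 0$, and since the complex $\Gamma_{\fp}(E^{\bullet}(M))$ computes $H^{\bullet}_{\fp}(M)$, it follows that
\[
H = H^t_{\fp}(M) = \coker\bigl(\Gamma_{\fp}(E^{t-1}(M)) \to \Gamma_{\fp}(E^t(M))\bigr)
\]
(with the convention $E^{-1}(M)=0$ if $t=0$); because $\Gamma_{\fp}$ carries injective $R$-modules to injective $R$-modules ($R$ being noetherian), $H$ is a quotient of an injective module. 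Second, the hypothesis that $(H^i_{\fp}(M))_{\fp}$ is injective for all $i$ lets us invoke Lemma \ref{Bass numbers of local coh}(b), which gives $\mu^0(\fp, H) = \mu^t(\fp, M) > 0$, so $\fp \in \Ass_R H$; moreover $H_{\fp} \cong H^t_{\fp R_{\fp}}(M_{\fp})$ is $\fp R_{\fp}$-power-torsion, so its only associated prime over $R_{\fp}$ is $\fp R_{\fp}$, which forces $\fp$ to be the \emph{only} associated prime of $H$ contained in $\fp$.

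Next I would choose $J$. By hypothesis $\Ass_R H$ is finite, say $\Ass_R H = \{\fp, \fq_1, \ldots, \fq_s\}$, where by the previous step none of the $\fq_i$ is contained in $\fp$. Set $J = \fq_1 \cdots \fq_s$ (the unit ideal if $s = 0$). Then $J \not\subseteq \fp$, while $J \subseteq \fq_i$ for each $i$, so $V(J) = V(\fq_1)\cup\cdots\cup V(\fq_s)$ contains each $\fq_i$ but not $\fp$. Put $N = H/\Gamma_J(H)$. Using the standard identities $\Ass_R \Gamma_J(H) = \Ass_R H \cap V(J)$ and $\Gamma_J(N) = 0$, together with the observation that an associated prime $\fq$ of $N$ with $\fq \not\supseteq J$ satisfies $N_{\fq}=H_{\fq}$ and hence lies in $\Ass_R H$, one gets $\Ass_R N = \Ass_R H \setminus V(J) = \{\fp\}$.

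The crux is to deduce from $\Ass_R N = \{\fp\}$ together with the divisibility of $H$ that $N$ is a direct sum of copies of $E(R/\fp)$. Fix $s \in R \setminus \fp$; since $R$ is a domain, $s \neq 0$. On one hand, $(0 :_N s)$ is a submodule of $N$, so $\Ass_R (0 :_N s) \subseteq \{\fp\}$; if it were nonzero it would contain an element whose annihilator is exactly $\fp$, which is impossible because every element of $(0:_N s)$ is killed by $s \notin \fp$. Thus $s$ acts injectively on $N$. On the other hand, multiplication by the nonzerodivisor $s$ is surjective on any injective $R$-module (apply $\Hom_R(-,E)$ to $0\to R\xrightarrow{s} R\to R/sR\to 0$), hence on $H$, which is a quotient of one, hence on its quotient $N$. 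So every $s\notin\fp$ acts invertibly on $N$, i.e. $N \cong N_{\fp}$ as $R$-modules. Finally $N_{\fp} \cong H_{\fp} \cong H^t_{\fp R_{\fp}}(M_{\fp})$ is injective over $R_{\fp}$ by hypothesis, and being $\fp R_{\fp}$-power-torsion it is a direct sum of copies of $E_{R_{\fp}}(R_{\fp}/\fp R_{\fp}) \cong E(R/\fp)$; hence so is $N$, as required.

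I expect the main obstacle to be recognizing how to combine the two structural facts about $H$: that it is a quotient of an injective module (which is precisely where finiteness of $\injdim_R M$ enters, via $E^{t+1}(M) = 0$) and that after passing to $N = H/\Gamma_J(H)$ its only associated prime is $\fp$. Neither suffices alone—a quotient of an injective need not be injective, and a module with a single associated prime whose localization is injective need not itself be injective—but together they force $N$ to be an $R_{\fp}$-module, after which the hypothesis on $(H^t_{\fp}(M))_{\fp}$ finishes the proof. The remaining ingredients (preservation of injectivity under $\Gamma_{\fp}$, the behavior of associated primes under $\Gamma_J$, and surjectivity of nonzerodivisors on injectives) are routine.
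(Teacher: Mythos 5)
Your proposal is correct and follows essentially the same route as the paper: use Lemma \ref{Bass numbers of local coh}(b) to get $\fp \in \Ass H^t_{\fp}(M)$, take $J$ to be the product of the remaining (finitely many) associated primes, show $\Ass N = \{\fp\}$, and combine injectivity of multiplication by $s \notin \fp$ with surjectivity coming from $N$ being a quotient of the injective module $\Gamma_{\fp}(E^t(M))$ to conclude $N \cong N_{\fp} \cong (H^t_{\fp}(M))_{\fp}$, a direct sum of copies of $E(R/\fp)$. The only differences are cosmetic (you verify the $\Ass$ decomposition and divisibility by hand where the paper cites standard references), so there is nothing to fix.
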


\begin{proof}
Let $\fp \in \Spec(R)$ be such that $\mu^t(\fp, M) > 0$. Since $(H^i_{\fp}(M))_{\fp}$ is injective over $R_{\fp}$ for all $i$, it follows from Lemma \ref{Bass numbers of local coh}(b) that $\mu^0(\fp, H^t_{\fp}(M)) = \mu^t(\fp, M) > 0$, and therefore $\fp \in \Ass H^t_{\fp}(M)$. By hypothesis, $H^t_{\fp}(M)$ has only finitely many associated primes: say $\Ass H^t_{\fp}(M) = \{\fp, \fq_1, \ldots, \fq_r\}$. Let $J = \fq_1 \cdots \fq_r$ (with the convention that if $r = 0$, that is, if $\Ass H^t_{\fp}(M) = \{\fp\}$, then $J = R$), and as in the statement of the lemma, let $N = H^t_{\fp}(M)/\Gamma_J(H^t_{\fp}(M))$. By \cite[Exercise 2.1.14]{LCBookBrodmannSharp}, $\Ass H^t_{\fp}(M)$ is the disjoint union of $\Ass N$ and $\Ass \Gamma_J(H^t_{\fp}(M))$, from which we conclude that $\Ass N = \{\fp\}$. 

Now let $f \in R \setminus \fp$ be given. By hypothesis, the minimal injective resolution $E^{\bullet}(M)$ is a complex of length $t$. Since $f \notin \cup_{\fq \in \Ass N} \fq = \fp$, multiplication by $f$ is injective on $N$. On the other hand, since $R$ is a domain, the injective $R$-module $\Gamma_{\fp}(E^t(M))$ is divisible, so multiplication by any non-zero $f \in R$ is surjective on $\Gamma_{\fp}(E^t(M))$ and therefore on any quotient of $\Gamma_{\fp}(E^t(M))$. Since $H^t_{\fp}(M)$ (and hence $N$) is such a quotient, we see that multiplication by $f$ is an isomorphism on $N$ for all $f \in R \setminus \fp$, and therefore $N = N_{\fp}$. But $(\Gamma_J(H^t_{\fp}(M)))_{\fp} = 0$, so $N_{\fp} = (H^t_{\fp}(M))_{\fp}$, which by hypothesis is an injective $R_{\fp}$-module and is supported only at $\fp R_{\fp}$. We conclude that $N = N_{\fp}$ is isomorphic to a direct sum of copies of $E_{R_{\fp}}(R_{\fp}/\fp R_{\fp})$; but $E_{R_{\fp}}(R_{\fp}/\fp R_{\fp}) \cong E(R/\fp)$ as $R$-modules, so the lemma follows.
\end{proof} 

\begin{prop}\label{inj hull is finite}
\begin{enumerate}[(a)]
\item Let $R = k[[x_1, \ldots, x_n]]$ where $k$ is a field of characteristic zero, let $M$ be a holonomic $D(R,k)$-module, and let $t = \injdim_R M$. For any $\fp \in \Spec R$ such that $\mu^t(\fp, M) > 0$, the $D(R,k)$-module $E(R/\fp)$ is holonomic.
\item Let $R = k[[x_1, \ldots, x_n]]$ where $k$ is a field of characteristic zero, let $M$ be a holonomic $D(R,k)$-module, let $S \subseteq R$ be a multiplicative subset, and let $t = \injdim_{S^{-1}R} S^{-1}M$. For any $S^{-1}\fp \in \Spec S^{-1}R$ such that $\mu^t(S^{-1}\fp, S^{-1}M) > 0$, the $D(S^{-1}R, k)$-module $E_{S^{-1}R}(S^{-1}R/S^{-1}\fp)$ is of finite length.
\item Let $R$ be a noetherian regular domain of characteristic $p>0$, let $M$ be an $F$-finite $F$-module, and let $t = \injdim_R M$. For any $\fp \in \Spec(R)$ such that $\mu^t(\fp, M) > 0$, the $F$-module $E(R/\fp)$ is $F$-finite.
\end{enumerate}
\end{prop}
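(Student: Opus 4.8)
The plan is to deduce all three parts of Proposition~\ref{inj hull is finite} from Lemma~\ref{summands of top inj module} by verifying its hypotheses in each of the three settings and then translating the conclusion into the relevant category of $D$-modules (resp. $F$-modules). In each case $R$ is a noetherian domain, $M$ has finite injective dimension $t$ by the bound \pref{basicbound} (Proposition~\ref{D-modules omnibus}(a), its localized version Theorem~\ref{inj dim bound for arbitrary localization}, or Proposition~\ref{F-modules omnibus}(a) respectively), and the local cohomology modules $H^i_{\fp}(M)$ need to be checked to have finitely many associated primes and injective localizations at $\fp$. Granting this, Lemma~\ref{summands of top inj module} produces an ideal $J$ such that $N = H^t_{\fp}(M)/\Gamma_J(H^t_{\fp}(M))$ is a direct sum of copies of $E(R/\fp)$ (resp. $E_{S^{-1}R}(S^{-1}R/S^{-1}\fp)$), and $N$ is a nonzero such sum since $\mu^t(\fp,M) > 0$. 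The point is that $N$ will automatically be holonomic (resp. $F$-finite, resp. of finite length), and since it is a direct sum of copies of a single indecomposable, that single summand $E(R/\fp)$ inherits the property.

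For part~(a): $H^i_{\fp}(M)$ is holonomic by Proposition~\ref{local coh omnibus}(b), hence has finitely many associated primes by Proposition~\ref{D-modules omnibus}(c), and its localization at $\fp$ is injective by Lemma~\ref{localized local coh is injective}. So Lemma~\ref{summands of top inj module} applies. Now $H^t_{\fp}(M)$ is holonomic, hence of finite length as a $D$-module by Proposition~\ref{D-modules omnibus}(d); $\Gamma_J(H^t_{\fp}(M))$ is a $D$-submodule (local cohomology functors preserve the $D$-module structure), so by Proposition~\ref{D-modules omnibus}(e) the quotient $N$ is holonomic. Writing $N \cong E(R/\fp)^{(\Lambda)}$ for some nonempty index set $\Lambda$, holonomicity of $N$ forces $E(R/\fp)$ to be holonomic: indeed $E(R/\fp)$ is a $D$-submodule of $N$, and a submodule of a holonomic module is holonomic by Proposition~\ref{D-modules omnibus}(e). (Alternatively, $\Lambda$ must be finite since a holonomic module has finite length, and then $E(R/\fp)$ is a quotient — equivalently a summand — of $N$.)

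For part~(b): here we work over the domain $S^{-1}R$, which need not be a power series ring but to which Lemma~\ref{summands of top inj module} still applies provided its hypotheses hold. We have $\injdim_{S^{-1}R} S^{-1}M = t < \infty$ by Theorem~\ref{inj dim bound for arbitrary localization}. For any prime $S^{-1}\fq$ of $S^{-1}R$, the module $S^{-1}M$ is a $D(S^{-1}R,k)$-module of finite length (Proposition~\ref{D-modules omnibus}(b), using that $M$ is holonomic hence finite length), so $H^i_{S^{-1}\fq}(S^{-1}M)$ is a $D(S^{-1}R,k)$-module; and $H^i_{S^{-1}\fq}(S^{-1}M) \cong (H^i_{\fq}(M))_{\fq} \otimes \cdots$ — more precisely, its localization at $S^{-1}\fq$ is $(H^i_{\fq}(M))_{\fq}$, which is injective over $R_{\fq} = (S^{-1}R)_{S^{-1}\fq}$ by Lemma~\ref{localized local coh is injective}. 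Finite associated primes: $H^i_{S^{-1}\fq}(S^{-1}M)$ is a localization of the holonomic module $H^i_{\fq}(M)$, whose finitely many associated primes survive (a subset of them) under localization. So Lemma~\ref{summands of top inj module} yields $N = H^t_{S^{-1}\fp}(S^{-1}M)/\Gamma_{J}(\cdots)$ a nonzero direct sum of copies of $E_{S^{-1}R}(S^{-1}R/S^{-1}\fp)$. Now $H^t_{S^{-1}\fp}(S^{-1}M)$ is of finite length as a $D(S^{-1}R,k)$-module (local cohomology of the finite-length $D(S^{-1}R,k)$-module $S^{-1}M$), $\Gamma_J(\cdot)$ of it is a submodule, so $N$ is of finite length by the finite-length part of Proposition~\ref{D-modules omnibus}(e); since $N$ is a nonzero direct sum of copies of $E_{S^{-1}R}(S^{-1}R/S^{-1}\fp)$, that index set is finite and each summand $E_{S^{-1}R}(S^{-1}R/S^{-1}\fp)$, being a submodule of $N$, is of finite length.

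For part~(c): $H^i_{\fp}(M)$ is an $F$-finite $F$-module by Proposition~\ref{local coh omnibus}(c), hence has finitely many associated primes by Proposition~\ref{F-modules omnibus}(d). For the injectivity of $(H^i_{\fp}(M))_{\fp}$ one argues exactly as in Lemma~\ref{localized local coh is injective} but in the $F$-module setting: $(H^i_{\fp}(M))_{\fp} \cong H^i_{\fp R_{\fp}}(M_{\fp})$ is supported only at $\fp R_{\fp}$, hence is a module over the completion $\widehat{R_{\fp}}$ (a regular local ring of characteristic $p$), equals $H^i_{\fp\widehat{R_{\fp}}}(\widehat{R_{\fp}}\otimes M_{\fp})$ by flat base change, which is an $F$-finite $F$-module over $\widehat{R_{\fp}}$ by Proposition~\ref{F-modules omnibus}(c) together with Proposition~\ref{local coh omnibus}(c), hence has injective dimension $0$ by Proposition~\ref{F-modules omnibus}(a) (its support is the maximal ideal), hence is injective over $\widehat{R_{\fp}}$ and so over $R_{\fp}$ by flatness. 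So Lemma~\ref{summands of top inj module} applies and gives $N = H^t_{\fp}(M)/\Gamma_J(H^t_{\fp}(M))$ a nonzero direct sum of copies of $E(R/\fp)$. Now $H^t_{\fp}(M)$ is $F$-finite (Proposition~\ref{local coh omnibus}(c)), $\Gamma_J(H^t_{\fp}(M))$ is an $F$-submodule of it (the $F$-module structure is compatible with the torsion functor, as $\Gamma_J$ commutes with $F_R$ on injectives — cf. Proposition~\ref{F-modules omnibus}(b), or more directly $\Gamma_J(H^t_{\fp}(M)) = \varinjlim \Hom_R(R/J^{[p^e]}, H^t_{\fp}(M))$ carries a natural $F$-structure), so $N$ is $F$-finite by Proposition~\ref{F-modules omnibus}(e), and therefore each copy of $E(R/\fp)$ inside $N$ is $F$-finite, again by Proposition~\ref{F-modules omnibus}(e) applied to the split inclusion $E(R/\fp)\hookrightarrow N$.

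The main obstacle I anticipate is the verification, in each setting, that $\Gamma_J(H^t_{\fp}(M))$ is a submodule in the appropriate category — i.e. that the short exact sequence $0 \to \Gamma_J(H^t_{\fp}(M)) \to H^t_{\fp}(M) \to N \to 0$ is a sequence of $D$-modules (resp. $F$-modules) — so that the two-out-of-three statements Proposition~\ref{D-modules omnibus}(e) and Proposition~\ref{F-modules omnibus}(e) can be invoked. In the $D$-module case this is standard (the functors $\Gamma_J$ and $H^i_{\fp}$ commute with the action of differential operators, so everything in sight is a $D$-module and all maps are $D$-linear); in the $F$-module case it follows from the fact that $\Gamma_J$ of an $F$-module is again an $F$-module compatibly, which is implicit in Lyubeznik's treatment and can be seen from the description of $\Gamma_J$ as a direct limit of Frobenius powers, or from Proposition~\ref{F-modules omnibus}(b) applied to the injective resolution. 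A secondary, purely bookkeeping point is the identification in part~(b) of the prime ideals and modules upstairs and downstairs under localization — that $(H^i_{\fq}(M))_{\fq} \cong H^i_{S^{-1}\fq}(S^{-1}M)$ localized at $S^{-1}\fq$, that finitely many associated primes persist, and that $E_{S^{-1}R}(S^{-1}R/S^{-1}\fp) \cong E_{R_\fp}(R_\fp/\fp R_\fp)$ — all of which are routine but need to be said.
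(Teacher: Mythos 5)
Your overall strategy coincides with the paper's: verify the hypotheses of Lemma \ref{summands of top inj module} in each of the three settings (via Lemma \ref{localized local coh is injective}, the holonomicity or $F$-finiteness of local cohomology, and finiteness of associated primes), then transfer the finiteness property from the quotient $N$ to the indecomposable $E(R/\fp)$. However, one justification in part (b) is a genuine problem as written: you claim $H^t_{S^{-1}\fp}(S^{-1}M)$ has finite length as a $D(S^{-1}R,k)$-module because it is ``local cohomology of the finite-length $D(S^{-1}R,k)$-module $S^{-1}M$.'' That implication is not available --- it is precisely not known that local cohomology preserves finite length of $D$-modules (this is the content of Remark \ref{holonomic is necessary}, and the very reason the hypothesis in the Proposition is holonomicity rather than finite length), and over $S^{-1}R$ there is no holonomicity theory to appeal to. The correct route, which your own earlier sentence nearly states, is to identify $H^t_{S^{-1}\fp}(S^{-1}M) \cong S^{-1}\bigl(H^t_{\fp}(M)\bigr)$ by flat base change (Proposition \ref{local coh omnibus}(a)); then $H^t_{\fp}(M)$ is holonomic over $R$ (Proposition \ref{local coh omnibus}(b)), hence of finite length as a $D$-module (Proposition \ref{D-modules omnibus}(d)), and its localization has finite length over $D(S^{-1}R,k)$ by Proposition \ref{D-modules omnibus}(b). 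This is how the paper argues, and with that substitution your part (b) goes through; part (a) then follows as the case $S^{-1}R = R$, exactly as in the paper.

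Two smaller points. In part (c) your detour through the completion is both unnecessary and mis-cited: Proposition \ref{F-modules omnibus}(c) concerns localization, not completion, so it does not show that $\widehat{R_{\fp}} \otimes_{R_{\fp}} M_{\fp}$ is $F$-finite over $\widehat{R_{\fp}}$. No completion is needed in characteristic $p$: $(H^i_{\fp}(M))_{\fp}$ is already an $F_{R_{\fp}}$-finite $F_{R_{\fp}}$-module (Propositions \ref{local coh omnibus}(c) and \ref{F-modules omnibus}(c)), supported only at the maximal ideal of the noetherian regular ring $R_{\fp}$, so Proposition \ref{F-modules omnibus}(a) applied over $R_{\fp}$ gives injectivity directly, which is what the paper does. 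Finally, when you conclude by saying that $E(R/\fp)$ (with its canonical $D$- or $F$-module structure) ``is a submodule of $N$,'' you are tacitly using that the $R$-module decomposition of $N$ into copies of $E(R/\fp)$ is compatible with that structure; the paper makes the same assertion (``and, in fact, as $D(S^{-1}R,k)$-modules''), so this is a shared gloss rather than a defect peculiar to your argument, but it is worth recognizing that it is being used.
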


We observe that parts (a) and (b) remain true if $R$ is replaced with a \emph{polynomial} ring (see the proof of \cite[Theorem 4.4]{zhanginjdim}).

\begin{proof}
We prove part (b) first, and we begin by verifying the hypotheses of Lemma \ref{summands of top inj module} for $S^{-1}M$. The ring $S^{-1}R$ is a domain. Since $M$ is a $D$-module, it has finite injective dimension as an $R$-module by Proposition \ref{D-modules omnibus}(a), so $S^{-1}M$ has finite injective dimension as an $S^{-1}R$-module (used implicitly in the statement). For all $i \geq 0$ and for all $S^{-1}\fp \in \Spec S^{-1}R$, $H^i_{\fp}(M)$ is a holonomic $D$-module and so has finitely many associated primes; it follows that $S^{-1}(H^i_{\fp}(M)) \cong H^i_{S^{-1}\fp}(S^{-1}M)$ has finitely many associated primes as an $S^{-1}R$-module. All that remains to be checked is that, for all $i \geq 0$, $(H^i_{S^{-1}\fp}(S^{-1}M))_{S^{-1}\fp}$ is an injective $(S^{-1}R)_{S^{-1}\fp}$-module. Since the ring $(S^{-1}R)_{S^{-1}\fp}$ is simply $R_{\fp}$, and $(H^i_{S^{-1}\fp}(S^{-1}M))_{S^{-1}\fp} \cong H^i_{\fp R_{\fp}}(M_{\fp})$ as $R_{\fp}$-modules, this follows from Lemma \ref{localized local coh is injective}.
 
Now let $S^{-1}\fp \in \Spec S^{-1}R$ be such that $\mu^t(S^{-1}\fp, S^{-1}M) > 0$. By the proof of Lemma \ref{summands of top inj module}, there is an ideal $J \subseteq S^{-1}R$ such that $N = H^t_{S^{-1}\fp}(S^{-1}M)/\Gamma_J(H^t_{S^{-1}\fp}(S^{-1}M))$ is isomorphic to a direct sum of copies of 
\[
E_{R_{\fp}}(R_{\fp}/\fp R_{\fp}) \cong E_{S^{-1}R}(S^{-1}R/S^{-1}\fp)
\]
as $S^{-1}R$-modules and, in fact, as $D(S^{-1}R, k)$-modules. Since $H^t_{\fp}(M)$ is a holonomic (and hence finite length) $D$-module, its localization $H^t_{S^{-1}\fp}(S^{-1}M)$ (and hence the $D(S^{-1}R, k)$-module quotient $N$) is of finite length as a $D(S^{-1}R, k)$-module.  But then $E_{S^{-1}R}(S^{-1}R/S^{-1}\fp)$ must be of finite length as well, completing the proof of part (b). If we do not localize (that is, if $S^{-1}R = R$, $S^{-1}\fp = \fp \subseteq R$, and $S^{-1}M = M$), then the same proof shows that $E(R/\fp)$ is holonomic, proving part (a).

Finally, we prove part (c). Since $M$ is an $F$-finite $F$-module, so are the local cohomology modules $H^i_I(M)$ for all $i \geq 0$ and all ideals $I \subseteq R$; what is more, $(H^i_I(M))_{\fp}$ is an $F_{R_{\fp}}$-finite $F_{R_{\fp}}$-module for all $\fp \in \Spec(R)$, so since $(H^i_{\fp}(M))_{\fp}$ is supported only at the maximal ideal $\fp R_{\fp}$, it is an injective $R_{\fp}$-module by Proposition \ref{F-modules omnibus}(a). Therefore, the hypotheses of Lemma \ref{summands of top inj module} are satisfied, so if $\fp \in \Spec(R)$ is such that $\mu^t(\fp, M) > 0$, then there is an ideal $J \subseteq R$ such that $N = H^t_{\fp}(M)/\Gamma_J(H^t_{\fp}(M))$ is isomorphic to a direct sum of copies of $E(R/\fp)$. The $R$-submodule $\Gamma_J(H^t_{\fp}(M)) \subseteq H^t_{\fp}(M)$ is in fact an $F$-submodule, so the quotient $N$ is an $F$-finite $F$-module. It follows that $E(R/\fp)$ must also be $F$-finite, completing the proof. 
\end{proof}

\begin{remark}\label{holonomic is necessary}
In the proof of Proposition \ref{inj hull is finite}, we used the fact that if $M$ is a holonomic $D$-module, any local cohomology module $H^i_I(M)$ has finitely many associated primes as an $R$-module, for the reason that it is itself a holonomic $D$-module. In fact, any finite length (indeed, finitely generated) $D$-module has finitely many associated primes by \cite[Theorem 2.4(c)]{LyubeznikFinitenessLocalCohomology}. However, we do not know whether $H^i_I(M)$ is of finite length as a $D$-module whenever $M$ is. This is the reason why we require the stronger hypothesis (in Proposition \ref{inj hull is finite} and in our Main Theorem) that $M$ be holonomic.
\end{remark}

Having shown that the minimal injective resolution of an $F$-finite $F$-module or (localization of a) holonomic $D$-module terminates in an object that is the direct sum of indecomposables with certain finiteness properties, our next task is to determine exactly which indecomposables have these finiteness properties. This we do in the following proposition, of which only the $D$-module parts are new.

\begin{prop}\label{which inj hulls are finite}
\begin{enumerate}[(a)]
\item Let $R = k[[x_1, \ldots, x_n]]$ where $k$ is a field of characteristic zero, and let $\fp \subseteq R$ be a prime ideal. The $D(R,k)$-module $E(R/\fp)$ is holonomic if and only if $\hgt \fp \geq n-1$.
\item Let $R = k[[x_1, \ldots, x_n]]$ where $k$ is a field of characteristic zero, let $S \subseteq R$ be a multiplicative subset, and let $S^{-1} \fp \subseteq S^{-1}R$ be a prime ideal. The $D(S^{-1}R, k)$-module $E_{S^{-1}R}(S^{-1}R/S^{-1}\fp)$ is of finite length if and only if $S^{-1}\fp$ is contained in only finitely many distinct prime ideals of $S^{-1}R$.
\item Let $(R, \fm)$ be a regular local ring of characteristic $p>0$ and dimension $n$, and let $\fp \subseteq R$ be a prime ideal.  The $F$-module $E(R/\fp)$ is $F$-finite if and only if $\hgt \fp \geq n-1$.
\item Let $R$ be a noetherian regular ring of characteristic $p>0$, and let $\fp \subseteq R$ be a prime ideal. The $F$-module $E(R/\fp)$ is $F$-finite if and only if $\fp$ is contained in only finitely many distinct prime ideals of $R$.
\end{enumerate}
\end{prop}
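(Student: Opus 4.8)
The plan is to prove the four parts by first establishing the two ``local'' statements (a) and (c) and then bootstrapping to the ``global/localized'' statements (b) and (d) via a reduction to the local case at each maximal ideal. I will treat the $D$-module parts (a) and (b) in detail, since the $F$-module parts (c) and (d) are already in the literature (\cite[Theorem 3.3]{zhanginjdim} and the surrounding results of \cite{LyubeznikFModulesApplicationsToLocalCohomology}), and point out where the argument must be modified to use the pseudo-Jacobson hypothesis of section \ref{Pseudo-Jacobson rings} in place of the Jacobson hypothesis.

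\medskip

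\noindent\emph{Part (a).} The key is the identification, already recalled in section \ref{FandDmodules}, $E(R/\fp) \cong (H^{c}_{\fp}(R))_{\fp}$ where $c = \hgt\fp$, together with the fact that $H^{c}_{\fp}(R)$ is a holonomic $D$-module (Proposition \ref{local coh omnibus}(b)). For the ``if'' direction: if $c = n$, then $\fp = \fm$ is the maximal ideal, so $E(R/\fm) \cong H^{n}_{\fm}(R)$ is holonomic; if $c = n-1$, then $R/\fp$ is a one-dimensional complete local domain, so $\Supp_R E(R/\fp) = V(\fp)$ has dimension $1$, and I will compute $d(E(R/\fp))$ directly. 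Here one uses that $E(R/\fp)$ is a localization of a holonomic module, hence finitely generated over $D$, and a good-filtration / characteristic-variety computation shows $d = n$ exactly when the support is $\leq 1$-dimensional (the inequality $d(E(R/\fp)) \geq \dim\Supp = n - c$ always holds, and Bernstein's theorem gives $d \geq n$, so the only way to have $d = n$ is $n - c \leq 1$). For the ``only if'' direction: if $\hgt\fp \leq n-2$, then $\dim\Supp_R E(R/\fp) = \dim R/\fp \geq 2$; but a holonomic $D$-module $N$ satisfies $d(N) = n$, and since $d(N) \geq \dim\Supp_R N$ always, holonomicity would force $\dim\Supp_R E(R/\fp) \leq n$ — which is not yet a contradiction, so the real input is finer: one shows instead that $E(R/\fp)$ is \emph{not even finitely generated} over $D$ when $\dim R/\fp \geq 2$, or alternatively invokes the known classification for polynomial rings (referenced after the proposition) and transfers it via completion. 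The cleanest route is: $E(R/\fp)$ holonomic $\Rightarrow$ finite length as a $D$-module (Proposition \ref{D-modules omnibus}(d)) $\Rightarrow$ by Proposition \ref{D-modules omnibus}(b) its further localizations are finite length over the localized ring of differential operators; localizing at a suitable height-$(n-1)$ prime $\fq \supsetneq \fp$ and completing, one lands in a power series ring of dimension $< n$ where the injective hull of a height-$0$ prime fails to be finite length, giving the contradiction. This dimension-reduction step is the main obstacle and must be set up carefully.

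\medskip

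\noindent\emph{Parts (b) and (d).} Let $\fq = S^{-1}\fp$. By Proposition \ref{D-modules omnibus}(d) and (b) (resp.\ Proposition \ref{F-modules omnibus}(e) and (c)), finite length of $E_{S^{-1}R}(S^{-1}R/\fq)$ over $D(S^{-1}R,k)$ (resp.\ $F$-finiteness) is detected after localizing at each maximal ideal of $S^{-1}R$: specifically, $E_{S^{-1}R}(S^{-1}R/\fq)$ has finite length iff for every maximal ideal $\fn \supseteq \fq$ of $S^{-1}R$, the $(S^{-1}R)_{\fn}$-module $E_{S^{-1}R}(S^{-1}R/\fq)_{\fn} \cong E_{(S^{-1}R)_{\fn}}\big((S^{-1}R)_{\fn}/\fq(S^{-1}R)_{\fn}\big)$ has finite length over the corresponding local ring of differential operators, and there are only finitely many such $\fn$ (this last finiteness is automatic from Proposition \ref{D-modules omnibus}(c)/(e), since a finite-length $D$-module has finitely many associated primes). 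For the ``if'' direction: if $\fq$ lies in only finitely many primes, then in particular $\fq$ is contained in finitely many maximal ideals $\fn_1,\dots,\fn_m$, and at each $\fn_j$ the local ring $(S^{-1}R)_{\fn_j}$ is a regular local ring of some dimension $n_j$ with $\fq(S^{-1}R)_{\fn_j}$ of height $n_j$ or $n_j - 1$ (here I use that the set of primes between $\fq$ and $\fn_j$ is finite, hence the saturated chain has length $\leq 1$ after at most one intermediate prime — more precisely, ``finitely many primes above $\fq$'' forces $\dim (S^{-1}R)_{\fn_j}/\fq \leq 1$); then completing and applying part (a) (or its local analogue) shows each localization is finite length. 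For the ``only if'' direction: if $\fq$ is contained in infinitely many primes, then either it is contained in infinitely many \emph{maximal} ideals — impossible if $E(S^{-1}R/\fq)$ is finite length, since then it would have infinitely many associated primes, contradicting Proposition \ref{D-modules omnibus}(c) — or it is contained in a prime $\fr$ with $\dim(S^{-1}R)_{\fr}/\fq \geq 2$, and then at a maximal ideal $\fn \supseteq \fr$ the local injective hull has $2$-dimensional support and part (a)'s ``only if'' direction applies to give a contradiction.

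\medskip

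\noindent\emph{Where pseudo-Jacobson enters.} The subtlety — and the reason the paper bothers introducing pseudo-Jacobson rings — is in the ``only if'' direction just sketched: to run the dichotomy ``infinitely many primes $\Rightarrow$ infinitely many maximal ideals or a $2$-dimensional quotient,'' one needs that above any non-maximal prime there are infinitely many maximal ideals, which is precisely the pseudo-Jacobson property, and this replaces the Jacobson hypothesis used in \cite[Theorem 3.3]{zhanginjdim}. Concretely: if $\fq$ is non-maximal in $S^{-1}R$ and $S^{-1}R$ is pseudo-Jacobson (which it is in the cases of interest, by Proposition \ref{localization is pseudo-Jacobson}), then $\fq$ sits in infinitely many maximal ideals; if these maximal ideals all had height $\hgt\fq + 1$ then $E(S^{-1}R/\fq)$ would still have infinitely many associated primes and fail to be finite length — and this is enough, without needing to separately produce a $2$-dimensional quotient. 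So the logical skeleton of the ``only if'' direction is: finite length $\Rightarrow$ finitely many associated primes $\Rightarrow$ (contrapositive, via pseudo-Jacobson) $\fq$ is contained in only finitely many maximal ideals, and combined with the local analysis at each of those via part (a), $\fq$ is contained in only finitely many primes total. I expect assembling this reduction — keeping straight the passage between $S^{-1}R$, its localizations at maximal ideals, and the completions of those, and checking at each stage that the relevant $D$-module (resp.\ $F$-module) structure and finiteness are preserved (using Proposition \ref{D-modules omnibus}(b) and Lemma \ref{completion is a D-module}, resp.\ Proposition \ref{F-modules omnibus}(c)) — to be the bulk of the work, with part (a)'s ``only if'' direction the single hardest technical point.
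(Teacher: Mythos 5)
Your proposal has genuine gaps, and the most damaging one is in the ``only if'' direction of (b)/(d). You argue that if $\fq$ were contained in infinitely many maximal ideals, then a finite-length $E_{S^{-1}R}(S^{-1}R/\fq)$ ``would have infinitely many associated primes,'' contradicting Proposition \ref{D-modules omnibus}(c). This is false: an injective hull $E_A(A/\fq)$ has exactly one associated prime, namely $\fq$, no matter how many primes lie above $\fq$. So the mechanism you propose for ruling out finite length simply does not exist, and since you also lean on it in the ``where pseudo-Jacobson enters'' paragraph, the entire negative direction collapses. The paper's actual mechanism is different: the minimal injective resolution of $R$ (identified with the Cousin complex) is a complex of $D$-modules; localizing and applying $\Gamma_{\fp}$, the Hartshorne--Lichtenbaum vanishing theorem produces short exact sequences of $D(S^{-1}R,k)$-modules such as
\[
0 \rightarrow H^h_{S^{-1}\fp}(S^{-1}R) \rightarrow E_{S^{-1}R}(S^{-1}R/S^{-1}\fp) \rightarrow \bigoplus_{\fq} E_{S^{-1}R}(S^{-1}R/S^{-1}\fq) \rightarrow 0,
\]
and finite length is tracked through these: an infinite direct sum of nonzero injective hulls on the right (coming from the infinitely many primes above $S^{-1}\fp$ when its coheight is at least $2$, or from infinitely many maximal ideals in the coheight-one case) cannot have finite length, while the local cohomology terms do, which is what forces the dichotomy. (Incidentally, the pseudo-Jacobson property is not used in this proposition at all; it only enters later, in Corollary \ref{special cases}.)

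Your treatment of (a) also has two gaps. For the ``if'' direction with $\hgt\fp = n-1$, you assert that $E(R/\fp)$, being a localization of a holonomic module, is finitely generated over $D$, and that a characteristic-variety count gives $d=n$ whenever the support is at most one-dimensional. Neither claim is justified: localization at a prime (as opposed to at a single element) is exactly the operation the paper warns does not preserve finite generation or holonomicity (see the discussion after Proposition \ref{D-modules omnibus} and Remark \ref{non-holonomic localization}), and small support does not force $d(M)=n$ (there is no upper bound of the needed kind; $d(M)\geq\dim\Supp_R M$ goes the wrong way). The paper instead gets holonomicity of $E(R/\fp)$ from the extension $0 \to H^{n-1}_{\fp}(R) \to E(R/\fp) \to E(R/\fm) \to 0$ and closure of holonomicity under extensions. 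For the ``only if'' direction, you defer to the unproven assertion that in a smaller ring the injective hull of a low-height prime fails to be finite length -- which is essentially the statement being proved, so the argument is circular as written -- and the proposed passage through completion is not controlled by anything in the paper: $E_{R_{\fq}}(R_{\fq}/\fp R_{\fq})$ is not $\fq R_{\fq}$-torsion, so Lemma \ref{completion is a D-module} does not tell you how it behaves under completion, nor is it shown that finite length over $D(R_{\fq},k)$ transfers to the completed ring. Finally, your local-to-global reduction for (b)/(d) (``finite length is detected after localizing at each maximal ideal'') is only proved in one direction by Proposition \ref{D-modules omnibus}(b); the converse assembly step is never justified, and the paper avoids needing it by working with the exact sequences directly over $S^{-1}R$.
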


In part (a), the conclusion is different from the polynomial case. If $R$ is replaced with a polynomial ring, $E(R/\fp)$ is holonomic if and only if $\fp$ is \emph{maximal}: see \cite[Propositions 4.2, 4.3]{zhanginjdim}. 

\begin{proof}
We prove part (b) first, and we begin by considering the possible cases. Let $h$ denote the height of $S^{-1}\fp$. If $S^{-1}\fp$ is a maximal ideal of $S^{-1}R$, we must show that $E_{S^{-1}R}(S^{-1}R/S^{-1}\fp)$ is of finite length. If there exists a chain $S^{-1}\fp \subset S^{-1}\fs \subset S^{-1}\fq$ of proper inclusions of prime ideals, then since $S^{-1}R$ is a noetherian ring, it is well-known that there are infinitely many prime ideals lying strictly between $S^{-1}\fp$ and $S^{-1}\fq$. Therefore, in this case we must show that $E_{S^{-1}R}(S^{-1}R/S^{-1}\fp)$ is not of finite length (this is the last case we treat below). Since $S^{-1}R$ is regular and therefore catenary, the only remaining case is that in which $S^{-1}\fp$ is not maximal, but the only prime ideals properly containing it are maximal ideals of height $h+1$. In this case, we must show that $E_{S^{-1}R}(S^{-1}R/S^{-1}\fp)$ is of finite length if and only if there are only finitely many such maximal ideals.

Suppose first that $S^{-1}\fp$ is a maximal ideal of $S^{-1}R$. Since $S^{-1}R$ is Gorenstein, by Proposition \ref{local coh omnibus}(d), $E_{S^{-1}R}(S^{-1}R/S^{-1}\fp) \cong H^h_{S^{-1}\fp}(S^{-1}R)$, which is a localization of the holonomic $D$-module $H^h_{\fp}(R)$ and is therefore of finite length as a $D(S^{-1}R, k)$-module by Proposition \ref{D-modules omnibus}(b,d).

Now suppose that $S^{-1}\fp$ is not maximal, but that all maximal ideals containing it have height $h+1$. Since $E^{\bullet}$ can be identified with the Cousin complex of $R$ \cite[Theorem 5.4]{SharpCousinComplex}, all of whose differentials are direct sums of canonical localization maps, it is a complex of $D$-modules. If we localize $E^{\bullet}$ at $S$, we obtain the minimal injective resolution of $S^{-1}R$ as a module over itself, and this resolution is a complex of $D(S^{-1}R, k)$-modules. After applying $\Gamma_{S^{-1}\fp}$, we obtain a short exact sequence
\[
0 \rightarrow H^h_{S^{-1}\fp}(S^{-1}R) \rightarrow E_{S^{-1}R}(S^{-1}R/S^{-1}\fp) \rightarrow \bigoplus_{\substack{S^{-1}\fp \subseteq S^{-1}\fq \\ \hgt S^{-1}\fq = h+1}} E_{S^{-1}R}(S^{-1}R/S^{-1}\fq) \rightarrow 0,
\]
which is an exact sequence of $D(S^{-1}R, k)$-modules. (The last map is surjective by the Hartshorne-Lichtenbaum vanishing theorem \cite[Theorem 8.2.1]{LCBookBrodmannSharp}.) Since $H^h_{\fp}(R)$ is a holonomic $D$-module, it has finite length as a $D$-module, and therefore its localization $H^h_{S^{-1}\fp}(S^{-1}R)$ has finite length as a $D(S^{-1}R, k)$-module. It follows that $E_{S^{-1}R}(S^{-1}R/S^{-1}\fp)$ is of finite length as a $D(S^{-1}R, k)$-module if and only if the third term in the displayed short exact sequence is. By the previous paragraph, each summand $E_{S^{-1}R}(S^{-1}R/S^{-1}\fq)$ is of finite length, since the $S^{-1}\fq$ are maximal ideals; therefore the sum is of finite length if and only if there are finitely many summands, as desired.

Finally, we suppose that there exists a chain $S^{-1}\fp \subset S^{-1}\fs \subset S^{-1}\fq$ of proper inclusions of prime ideals in $S^{-1}R$. We claim that if such a chain exists, $E_{S^{-1}R}(S^{-1}R/S^{-1}\fp)$ cannot be of finite length. We may assume that the chain is saturated, from which it follows that $\hgt S^{-1}\fq = h+2$. If we localize at $S^{-1}\fq$, the ring $(S^{-1}R)_{S^{-1}\fq}$ is isomorphic to $R_{\fq}$, and $E_{S^{-1}R}(S^{-1}R/S^{-1}\fp) \cong E_{R_{\fq}}(R_{\fq}/\fp R_{\fq})$ as $R_{\fq}$-modules. If $E_{S^{-1}R}(S^{-1}R/S^{-1}\fp)$ were of finite length as a $D(S^{-1}R, k)$-module, its localization $E_{R_{\fq}}(R_{\fq}/\fp R_{\fq})$ would be of finite length as a $D(R_{\fq}, k)$-module, so it suffices to prove this last statement false. We have therefore reduced the proof to the case where $S^{-1}R = R_{\fq}$ for some prime ideal $\fq$ and $\fp R_{\fq}$ is a prime ideal in $R_{\fq}$ of height $h = \dim R_{\fq} - 2$.

Let $E^{\bullet} = E^{\bullet}_{R_{\fq}}(R_{\fq})$ be the minimal injective resolution of $R_{\fq}$. The complex $\Gamma_{\fp R_{\fq}}(E^{\bullet})$ takes the form
\[
0 \rightarrow E(R_{\fq}/\fp R_{\fq}) \xrightarrow{\delta^h} \bigoplus_{\substack{\fp R_{\fq} \subseteq \fs R_{\fq} \\ \hgt \fs R_{\fq} = h+1}} E(R_{\fq}/\fs R_{\fq}) \xrightarrow{\delta^{h+1}} E(R_{\fq}/\fq R_{\fq}) \rightarrow 0,
\]
and gives rise to three short exact sequences

\begin{align*}
0 \rightarrow H^h_{\fp R_{\fq}}(R_{\fq}) \rightarrow E_{R_{\fq}}(R_{\fq}/\fp R_{\fq}) \rightarrow \im \delta^h \rightarrow 0, \\
0 \rightarrow \im \delta^h \rightarrow \ker \delta^{h+1} \rightarrow H^{h+1}_{\fp R_{\fq}}(R_{\fq}) \rightarrow 0,\\
0 \rightarrow \ker \delta^{h+1} \rightarrow \bigoplus_{\substack{\fp R_{\fq} \subseteq \fs R_{\fq} \\ \hgt \fs R_{\fq} = h+1}} E_{R_{\fq}}(R_{\fq}/\fs R_{\fq}) \rightarrow E_{R_{\fq}}(R_{\fq}/\fq R_{\fq}) \rightarrow 0, 
\end{align*}
where now the $\delta^j$ are the differentials in the complex $\Gamma_{\fp R_{\fq}}(E^{\bullet})$ (and the third sequence is exact by the Hartshorne-Lichtenbaum theorem). What is more, these are exact sequences of $D(R_{\fq}, k)$-modules, since they arise from localizations of the Cousin complex of $R$. The modules $H^h_{\fp R_{\fq}}(R_{\fq})$, $H^{h+1}_{\fp R_{\fq}}(R_{\fq})$, and $E_{R_{\fq}}(R_{\fq}/\fq R_{\fq})$ are localizations at $\fq$ of holonomic (hence finite length) $D$-modules ($H^h_{\fp}(R)$, $H^{h+1}_{\fp}(R)$, and $H^{h+2}_{\fq}(R)$ respectively), so all three are of finite length as $D(R_{\fq}, k)$-modules. Assume for the purposes of contradiction that $E_{R_{\fq}}(R_{\fq}/\fp R_{\fq})$ is of finite length as a $D(R_{\fq}, k)$-module. Then we have the following chain of implications: since $H^h_{\fp R_{\fq}}(R_{\fq})$ and $E_{R_{\fq}}(R_{\fq}/\fp R_{\fq})$ are of finite length, so is $\im \delta^h$; since $\im \delta^h$ and $H^{h+1}_{\fp R_{\fq}}(R_{\fq})$ are of finite length, so is $\ker \delta^{h+1}$; since $\ker \delta^{h+1}$ and $E_{R_{\fq}}(R_{\fq}/\fq R_{\fq})$ are of finite length, so is $\oplus_{\fp R_{\fq} \subseteq \fs R_{\fq}, \hgt \fs R_{\fq} = h+1} E_{R_{\fq}}(R_{\fq}/\fs R_{\fq})$. This last statement is absurd, since there are infinitely many distinct summands $E_{R_{\fq}}(R_{\fq}/\fs R_{\fq})$. This contradiction completes the proof of part (b).

If we do not localize (that is, if $S^{-1}R = R$ and $S^{-1}\fp = \fp \subseteq R$), then the same proof (using Proposition \ref{D-modules omnibus}(e)) shows that $E(R/\fp)$ is holonomic if and only if $\fp$ is contained in only finitely many distinct prime ideals of $R$. Since $R$ is a local ring of dimension $n$, this condition is satisfied if and only if the height of $R$ is at least $n-1$, proving part (a).

The possible cases in part (d) are the same as in part (b): we must show that $E(R/\fp)$ is $F$-finite whenever $\fp$ is a maximal ideal (which, since $R$ is Gorenstein, follows at once from Proposition \ref{local coh omnibus}(c,d)); that $E(R/\fp)$ is not $F$-finite whenever there exists a chain $\fp \subset \fs \subset \fq$ of proper inclusions of prime ideals (which is \cite[Proposition 3.2]{zhanginjdim}); and that in the only remaining case, where $\fp$ is not maximal but the only prime ideals properly containing it are maximal ideals of height $\hgt \fp + 1$, that $E(R/\fp)$ is $F$-finite if and only if there are only finitely many such maximal ideals. This last case is \cite[Proposition 3.1]{zhanginjdim}, which finishes the proof of part (d). As part (c) is merely a special case of part (d), the proof is complete.
\end{proof}

\begin{remark}\label{non-holonomic localization}
In the setting of Proposition \ref{which inj hulls are finite}(a), if $M$ is a holonomic $D$-module and $f \in R$, then $M_f$ is a holonomic $D$-module \cite[Theorem 3.4.1]{BjorkBookRIngDiffOperators}. It is known that $M_{\fp}$ need not be a holonomic $D$-module for all prime ideals $\fp \subseteq R$. We remark that the proposition provides many such examples: let $\fp$ be any prime ideal of height $h \leq n-2$, and consider the holonomic $D$-module $H^h_{\fp}(R)$. Its localization at $\fp$ is isomorphic to $E(R/\fp)$, which is not a holonomic $D$-module by the proposition.
\end{remark}

We record separately the special cases of Proposition \ref{which inj hulls are finite} that we will use in the proof of our Main Theorem. This is where the pseudo-Jacobson property defined in section \ref{Pseudo-Jacobson rings} is used.

\begin{cor}\label{special cases}
\begin{enumerate}[(a)]
\item Let $R = k[[x_1, \ldots, x_n]]$ where $k$ is a field of characteristic zero and $n \geq 2$. If $\fq \subseteq R$ is a prime ideal of height $h \geq 2$, $f \in \fq R_{\fq}$ is a nonzero element, and $(\fp R_{\fq})_f$ is a prime ideal of $(R_{\fq})_f$, then the $D((R_{\fq})_f, k)$-module $E_{(R_{\fq})_f}((R_{\fq})_f/(\fp R_{\fq})_f)$ has finite length if and only if $(\fp R_{\fq})_f$ is a maximal ideal in $(R_{\fq})_f$, that is, if and only if $\fp$ is a height $h-1$ prime ideal of $R$ contained in $\fq$ and not containing $f$.
\item Let $(R, \fm)$ be a regular local ring of characteristic $p>0$ and dimension $n \geq 2$. If $f \in \fm$ is a nonzero element and $\fp R_f \subseteq R_f$ is a prime ideal, the $F_{R_f}$-module $E_{R_f}(R_f/\fp R_f)$ is $F_{R_f}$-finite if and only if $\fp R_f$ is a maximal ideal in $R_f$, that is, if and only if $\fp$ is a height $n-1$ prime ideal of $R$ not containing $f$.
\end{enumerate}
\end{cor}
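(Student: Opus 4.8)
The plan is to deduce both parts directly from Proposition \ref{which inj hulls are finite} together with the pseudo-Jacobson machinery of section \ref{Pseudo-Jacobson rings}. The two parts have the same structure, so I would treat (a) in detail and then indicate the simpler modifications for (b). For (a), the first step is to observe that $(R_\fq)_f$ is itself a localization of the power series ring $R$, so that Proposition \ref{which inj hulls are finite}(b) applies to it: writing $f = a/s$ with $a \in \fq$ nonzero and $s \in R \setminus \fq$, one has $(R_\fq)_f = (R_\fq)_a = S^{-1}R$, where $S$ is the multiplicative closure of $(R \setminus \fq) \cup \{a\}$ (and $0 \notin S$ since $R$ is a domain and $a \neq 0$). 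Proposition \ref{which inj hulls are finite}(b) then tells us that $E_{(R_\fq)_f}((R_\fq)_f/(\fp R_\fq)_f)$ has finite length as a $D((R_\fq)_f,k)$-module if and only if $(\fp R_\fq)_f$ is contained in only finitely many distinct prime ideals of $(R_\fq)_f$.

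The second step is to see that this last condition is equivalent to $(\fp R_\fq)_f$ being maximal. One implication is trivial. For the converse, I would use that $R_\fq$ is a regular (hence catenary) local domain of dimension $h \geq 2$ and that $a$ is a nonzero element of its maximal ideal, so that Proposition \ref{localization is pseudo-Jacobson}(a) shows $(R_\fq)_f$ is pseudo-Jacobson; then any non-maximal prime of $(R_\fq)_f$ lies in infinitely many maximal ideals, hence in infinitely many primes. Combining the first two steps yields: $E_{(R_\fq)_f}((R_\fq)_f/(\fp R_\fq)_f)$ has finite length if and only if $(\fp R_\fq)_f$ is a maximal ideal of $(R_\fq)_f$.

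The third step rephrases maximality of $(\fp R_\fq)_f$ in terms of $\fp$. The hypothesis that $(\fp R_\fq)_f$ is a prime ideal unpacks, by the standing convention on localizations, to: $\fp \subseteq R$ is prime, $\fp \subseteq \fq$, and $a \notin \fp$. Any prime in a saturated chain below $\fp R_\fq$ in $R_\fq$ is contained in $\fp R_\fq$ and hence omits $a$, so such chains survive localization at $a$; therefore $\hgt_{(R_\fq)_f}(\fp R_\fq)_f = \hgt_{R_\fq}\fp R_\fq = \hgt_R \fp$, the last equality because $\fp \subseteq \fq$. Since $\dim (R_\fq)_f = h-1$, maximality of $(\fp R_\fq)_f$ forces $\hgt_R \fp = h-1$ by Proposition \ref{localization is pseudo-Jacobson}(b), while conversely $\hgt_R\fp = h-1 = \dim (R_\fq)_f$ implies that no prime strictly contains $(\fp R_\fq)_f$, so it is maximal; this is exactly the stated condition, proving (a). For (b), $(R,\fm)$ is already a regular local domain of dimension $n \geq 2$, so no Cohen-structure step is needed: one runs the same three steps with $R$ in place of $R_\fq$, $n$ in place of $h$, $F$-finiteness in place of finite length, and Proposition \ref{which inj hulls are finite}(d) in place of (b). The only part that calls for genuine care is the height bookkeeping through the (possibly two-stage) localization in the third step; everything else is a direct assembly of the results cited above.
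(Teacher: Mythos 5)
Your proposal is correct and follows essentially the same route as the paper's own proof: both deduce the corollary from Proposition \ref{which inj hulls are finite}(b),(d) by using Proposition \ref{localization is pseudo-Jacobson} to see that $(R_{\fq})_f$ and $R_f$ are pseudo-Jacobson with all maximal ideals of height $h-1$ (resp.\ $n-1$), so that ``contained in finitely many primes'' is equivalent to ``maximal.'' The only difference is that you spell out details the paper leaves implicit, namely the identification of $(R_{\fq})_f$ as a localization $S^{-1}R$ of $R$ and the height bookkeeping behind the ``that is'' clause, both of which you handle correctly.
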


\begin{proof}
A regular local ring is a catenary domain, so by Proposition \ref{localization is pseudo-Jacobson}, the rings $R_f$ of part (b) and $(R_{\fq})_f$ of part (a) are pseudo-Jacobson, and all their maximal ideals have the same height $n-1$. By the pseudo-Jacobson property, every non-maximal prime ideal $\fp R_f$ of $R_f$ in part (b) (resp. every non-maximal prime ideal $(\fp R_{\fq})_f$ of $(R_{\fq})_f$ in part (a)) is contained in infinitely many distinct maximal ideals, so part (b) (resp. (a)) follows from Proposition \ref{which inj hulls are finite}(d) (resp. (b)).
\end{proof}

\section{Injective dimension of holonomic $D$-modules}\label{injdim over power series}

In this section, we prove the characteristic-zero part (Theorem \ref{holonomic over power series}) of our main theorem. Most of the work in the proof is contained in the following proposition.

\begin{prop}\label{injdim equals dim after localizing D-mod}
Let $R = k[[x_1, \ldots, x_n]]$ where $k$ is a field of characteristic zero, and let $M$ be a holonomic $D(R,k)$-module. Let $\fq$ be a prime ideal of $R$ belonging to $\Supp_R M$, and let $f \in \fq R_{\fq}$ be a element that does not belong to any minimal prime of $M_{\fq}$. Then
\[
\injdim_{(R_{\fq})_f} (M_{\fq})_f = \dim \Supp_{(R_{\fq})_f} (M_{\fq})_f.
\]
\end{prop}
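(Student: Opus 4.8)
The plan is to combine the upper bound for injective dimension after localization (Theorem~\ref{inj dim bound for arbitrary localization}) with a lower bound coming from a Bass-number computation at a carefully chosen maximal ideal of $(R_{\fq})_f$. Write $A = (R_{\fq})_f$, $N = (M_{\fq})_f$, and $d = \dim \Supp_A N$. Since $N$ is a localization of the $D(R,k)$-module $M$, it is a $D(A,k)$-module (using Proposition~\ref{D-modules omnibus}(b) twice), and Theorem~\ref{inj dim bound for arbitrary localization} gives $\injdim_A N \le d$; our task is the reverse inequality $\injdim_A N \ge d$. The case $d = 0$ is immediate since $N \ne 0$, so we may assume $d \ge 1$. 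Choose a prime $\fp A \in \Supp_A N$ with $\dim \Supp_{A_{\fp A}} N_{\fp A} = d$, i.e. with $\fp A$ a \emph{minimal} prime of $N$ and $\dim A/\fp A = d$; equivalently, take a saturated chain of primes of $A$ from a minimal prime $\fp_0 A$ of $N$ up to a maximal ideal $\fm A$, of length $d$. The strategy is to produce, at the \emph{top} of this chain, a maximal ideal $\fm A$ of $A$ with $\mu^d(\fm A, N) > 0$.

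The key step will be an induction on $d$ showing that $E^{d}_A(N) \ne 0$. First I would settle the localized statement one step down: for any prime $\fp A$ of $A$ properly contained in a maximal ideal, $A_{\fp A} \cong R_{\fp}$ is again a formal power series ring over a characteristic-zero field by Cohen's structure theorem, and $N_{\fp A}$ is a $D(R_{\fp}, k)$-module which completes to a $D$-module over a genuine power series ring (Lemma~\ref{completion is a D-module}); moreover $\dim \Supp_{A_{\fp A}} N_{\fp A} < d$. So by induction (on the dimension of the support) we get $\injdim_{A_{\fp A}} N_{\fp A} = \dim\Supp_{A_{\fp A}} N_{\fp A} < d$ at every non-maximal prime. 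This forces $E^d_A(N)$ to be supported only at maximal ideals of $A$, all of which have height $n-1$ (respectively $h-1$) by Proposition~\ref{localization is pseudo-Jacobson}(b), since $R_{\fq}$ is a catenary local domain of dimension $h$. Now I would run the argument of the proof of Proposition~\ref{inj dim bound for localization} in reverse: since each $(H^i_{\fm A}(N))_{\fm A}$ is injective (Lemma~\ref{localized local coh is injective} applied to the $D$-module $N$ over $A$, via completion), Lemma~\ref{Bass numbers of local coh}(b) identifies $\mu^d(\fm A, N)$ with $\mu^0(\fm A, H^d_{\fm A}(N))$. Thus it remains to exhibit \emph{one} maximal ideal $\fm A$ with $H^d_{\fm A}(N) \ne 0$, equivalently with $\mathrm{cd}(\fm A, N) = d$ (local cohomological dimension); since $N$ localized at $\fm A$ has support of dimension $d$ there (as $\fm A$ lies atop a chain of length $d$ from a minimal prime of $N$), Grothendieck non-vanishing — or more precisely the fact that $H^{\dim}_{\mathfrak m}(\text{supported module}) \ne 0$ for a module of that dimension — gives $H^d_{\fm A}(N) \ne 0$, hence $\mu^d(\fm A, N) > 0$ and $E^d_A(N) \ne 0$.

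To make the last step airtight I would argue as follows: after localizing at $\fm A$ we have a module $N_{\fm A}$ over the local ring $A_{\fm A}$ whose support has dimension $d$, so $H^d_{\fm A A_{\fm A}}(N_{\fm A}) \ne 0$ by Grothendieck's non-vanishing theorem (\cite[Theorem 6.1.4]{LCBookBrodmannSharp}), and this module is $(H^d_{\fm A}(N))_{\fm A}$ by flat base change (Proposition~\ref{local coh omnibus}(a)); in particular $H^d_{\fm A}(N) \ne 0$. The one thing that must be checked with care — and which I expect to be the main obstacle — is that such a maximal ideal $\fm A$ of the \emph{right} height actually exists and actually lies above a minimal prime of $N$ realizing the full dimension $d$. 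This is exactly where the pseudo-Jacobson machinery of Section~\ref{Pseudo-Jacobson rings} enters: starting from a minimal prime $\fp_0 A$ of $N$ with $\dim A/\fp_0 A = d$, one climbs a saturated chain of length $d$ in $A$; Proposition~\ref{localization is pseudo-Jacobson}(a) guarantees this chain terminates at an honest maximal ideal of $A$ (rather than stalling at a non-maximal prime contained only in things of the wrong height), and part (b) guarantees that maximal ideal has height $n-1$ (resp. $h-1$) — which is what one needs for the completion $\widehat{A_{\fm A}}$ to be a power series ring and for Proposition~\ref{which inj hulls are finite} / Corollary~\ref{special cases} to be applicable if one wants to identify the top term of $E^{\bullet}_A(N)$ as a sum of copies of $E(A/\fm A)$. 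Assembling these pieces: $\injdim_A N \ge d$, which together with Theorem~\ref{inj dim bound for arbitrary localization} yields the claimed equality.
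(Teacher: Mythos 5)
Your reduction to finding a maximal ideal $\fm A$ of $A=(R_{\fq})_f$ with $\mu^d(\fm A, N)>0$ is fine (and the observation that $E^d_A(N)$ can only be supported at maximal ideals does follow, via Proposition \ref{inj dim bound for localization}, from the fact that $\Supp_A N$ is specialization-closed, so every non-maximal prime in it has $\dim\Supp_{A_{\fp A}}N_{\fp A}<d$). The gap is at the crux: you dispose of the remaining step by claiming $H^d_{\fm A}(N)\neq 0$ ``by Grothendieck non-vanishing,'' but that theorem requires a \emph{finitely generated} module over the local ring, and $N_{\fm A}$ is a localization of a holonomic $D$-module, which is essentially never finitely generated. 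The blanket fact you invoke is false for general modules, and it fails for exactly the modules at issue: $E(R/\fp)$ with $\hgt\fp=n-1$ is a holonomic $D$-module (Proposition \ref{which inj hulls are finite}(a)) with $\dim\Supp_R E(R/\fp)=1$, yet $H^1_{\fm}(E(R/\fp))=0$ because injective modules are $\Gamma_{\fm}$-acyclic. Ruling out precisely this kind of behaviour for $N=(M_{\fq})_f$ \emph{is} the content of the proposition: given Lemma \ref{Bass numbers of local coh}(b) and your support observation, the existence of a maximal ideal with $H^d_{\fm A}(N)\neq 0$ is equivalent to the statement being proved, so citing a non-vanishing theorem here is circular in spirit and inapplicable in fact. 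The paper's proof takes a different route that supplies exactly this missing input: it inducts on $\dim\Supp_{R_{\fq}}M_{\fq}$, chooses $\fp R_{\fq}\in\Supp_{R_{\fq}}M_{\fq}$ with $f\in\fp R_{\fq}$ and $\dim\Supp_{R_{\fp}}M_{\fp}=l$, gets $\injdim_{(R_{\fp})_f}(M_{\fp})_f=l-1$ from the inductive hypothesis, hence $l-1\le\injdim_A N\le l$, and excludes the value $l-1$: a witness prime $(\fs R_{\fq})_f$ would be non-maximal (height of $\fs$ at most $\hgt\fq-2$), so Proposition \ref{inj hull is finite}(b) would force $E_A(A/(\fs R_{\fq})_f)$ to have finite length as a $D(A,k)$-module, contradicting Corollary \ref{special cases}(a). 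That is where the pseudo-Jacobson machinery really enters --- not, as in your sketch, merely to ensure a chain terminates at a maximal ideal of the right height.

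A secondary problem: your inductive step at non-maximal primes asserts that $A_{\fp A}\cong R_{\fp}$ ``is again a formal power series ring by Cohen's structure theorem'' (it is not complete, so it is not), and that ``by induction $\injdim_{A_{\fp A}}N_{\fp A}=\dim\Supp_{A_{\fp A}}N_{\fp A}$.'' The pair $(R_{\fp},M_{\fp})$ is not an instance of the statement being proved (no non-unit is inverted), and the asserted equality is false in general for localizations at primes --- again $E(R/\fp)$ with $\hgt\fp=n-1$, localized at $\fm$, gives injective dimension $0$ and support dimension $1$. This particular misstep is harmless because only the inequality $\injdim_{A_{\fp A}}N_{\fp A}\le\dim\Supp_{A_{\fp A}}N_{\fp A}<d$ is needed, and Proposition \ref{inj dim bound for localization} provides it; but the non-vanishing step of the previous paragraph is a genuine missing idea, and repairing it leads you back to the finite-length dichotomy (Proposition \ref{inj hull is finite} together with Corollary \ref{special cases}) that the paper uses.
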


\begin{proof}
Recall that if $S \subseteq R$ is a multiplicative subset, then
\[
\injdim_{S^{-1}R} S^{-1}M \leq \dim \Supp_{S^{-1}R} S^{-1}M
\] 
by Theorem \ref{inj dim bound for arbitrary localization}. We will use this fact repeatedly below.

We proceed by induction on $\dim \Supp_{R_{\fq}} M_{\fq}$. Observe first that if $\dim R_{\fq} < 2$, then either $\dim \Supp_{(R_{\fq})_f} (M_{\fq})_f = 0$ and the statement is immediate by the previous paragraph, or no such $f$ as in the statement exists. Therefore we may assume that $\hgt \fq \geq 2$ for all prime ideals $\fq$ we encounter. Let $\fq$ be a minimal element of $\Supp_R M$. The localization $M_{\fq}$ has zero-dimensional support over $R_{\fq}$, so it is an injective $R_{\fq}$-module; the further localization $(M_{\fq})_f$ for any $f \in \fq R_{\fq}$ is then an injective $(R_{\fq})_f$-module, establishing the base case.

Now suppose that $l \geq 0$ and that the displayed equality holds for all $\fp \in \Supp_R M$ such that $\dim \Supp_{R_{\fp}} M_{\fp} \leq l$. Fix $\fq \in \Supp_R M$ such that $\dim \Supp_{R_{\fq}} M_{\fq} = l + 1$, and let $f \in \fq R_{\fq}$ be an element that does not belong to any minimal prime of $M_{\fq}$. Choose $\fp R_{\fq} \in \Supp_{R_{\fq}} M_{\fq}$ such that $\dim \Supp_{R_{\fp}} M_{\fp} = \dim \Supp_{R_{\fq}} M_{\fq} - 1 = l$ and $f \in \fp R_{\fq}$. Then $f$ does not belong to any minimal prime of $M_{\fp}$, so $\dim \Supp_{(R_{\fp})_f} (M_{\fp})_f = \dim \Supp_{R_{\fp}} M_{\fp} - 1 = l - 1$ ($R_{\fp}$ is a local ring) and by the inductive hypothesis,
\[
\injdim_{(R_{\fp})_f} (M_{\fp})_f = \dim \Supp_{(R_{\fp})_f} (M_{\fp})_f = l-1.
\]
Since $(R_{\fp})_f$ is a localization of $(R_{\fq})_f$, we obtain the chain of inequalities
\[
l-1 = \injdim_{(R_{\fp})_f} (M_{\fp})_f \leq \injdim_{(R_{\fq})_f} (M_{\fq})_f \leq \dim \Supp_{(R_{\fq})_f} (M_{\fq})_f = l.
\]
It remains only to rule out the case $\injdim_{(R_{\fq})_f} (M_{\fq})_f = l-1$. Since $\injdim_{(R_{\fp})_f} (M_{\fp})_f = l-1$, there is a prime ideal $(\fs R_{\fp})_f$ of $(R_{\fp})_f$ such that 
\[
\mu^{l-1}((\fs R_{\fp})_f, (M_{\fp})_f) \, (= \mu^{l-1}((\fs R_{\fq})_f, (M_{\fq})_f)) > 0.
\]
Since $f \in \fp R_{\fq} \setminus \fs R_{\fq}$ and $\fp R_{\fq} \subset \fq R_{\fq}$, we have $\hgt \fs \leq \hgt \fq - 2$. It follows that $(\fs R_{\fq})_f$ is not a maximal ideal of $(R_{\fq})_f$. Since $\mu^{l-1}((\fs R_{\fq})_f, (M_{\fq})_f)) > 0$, we may invoke Proposition \ref{inj hull is finite}(b) and Corollary \ref{special cases}(a), which here imply that $\injdim_{(R_{\fq})_f} (M_{\fq})_f$ cannot equal $l-1$, completing the proof.
\end{proof}

\begin{thm}\label{holonomic over power series}
Let $R = k[[x_1, \ldots, x_n]]$ where $k$ is a field of characteristic zero, and let $M$ be a holonomic $D(R,k)$-module. Then
\[
\injdim_R M \geq \dim \Supp_R M - 1.
\]
\end{thm}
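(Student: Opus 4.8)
The plan is to derive the bound from Proposition~\ref{injdim equals dim after localizing D-mod}: that proposition already asserts the \emph{equality} $\injdim = \dim\Supp$ after a suitable two-step localization, so the only issue is to extract from it a statement about $M$ itself. We may assume $M \neq 0$, and set $d = \dim \Supp_R M$. If $d \leq 1$ there is nothing to prove, since $E_R(M) \neq 0$ forces $\injdim_R M \geq 0 \geq d-1$; so assume $d \geq 2$ (hence $n \geq 2$). The idea is to invert a single element $f \in R$, apply Proposition~\ref{injdim equals dim after localizing D-mod} with the ``outer'' prime taken to be the maximal ideal $\fm$ (which collapses one of the two localizations), and then bound $\dim \Supp_{R_f} M_f$ from below by $d - 1$.

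First I would choose $f$. Since $M$ is holonomic it is a finitely generated $D$-module, so by Proposition~\ref{D-modules omnibus}(c) it has finitely many associated primes, hence finitely many minimal primes $\fp_1, \ldots, \fp_r$ in $\Supp_R M$; as $d \geq 1$ none of these equals $\fm$, so prime avoidance produces an element $f \in \fm$ (necessarily nonzero) with $f \notin \fp_i$ for every $i$. Then $f \in \fm R_\fm = \fm$ lies in no minimal prime of $M_\fm = M$, so Proposition~\ref{injdim equals dim after localizing D-mod} applies and yields
\[
\injdim_{R_f} M_f = \dim \Supp_{R_f} M_f .
\]
Because $R_f$ is a localization of $R$, localizing the minimal injective resolution $E_R^\bullet(M)$ at $f$ shows $\injdim_{R_f} M_f \leq \injdim_R M$, so it remains to check $\dim \Supp_{R_f} M_f \geq d - 1$. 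For this, pick a minimal prime $\fq$ of $M$ with $\dim R/\fq = d$ (possible since the minimal elements of $\Supp_R M$ lie among its finitely many associated primes and $d$ is attained at one of them). The ring $R/\fq$ is a catenary local domain of dimension $d$ (as $R$ is regular local), and the image of $f$ in it is nonzero because $f \notin \fq$; so Proposition~\ref{intersection of height d-1 primes}, applied to $R/\fq$ with $t = d-1$, furnishes a height-$(d-1)$ prime of $R/\fq$ avoiding this image, whose expansion to $(R/\fq)_f$ has height $d - 1$. Since $\fq \in \Supp_R M$, $f \notin \fq$, and $R_f/\fq R_f \cong (R/\fq)_f$, this gives $\dim \Supp_{R_f} M_f \geq \dim (R/\fq)_f \geq d-1$; combining with the display completes the proof.

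The substantive work has all been done already — in Proposition~\ref{injdim equals dim after localizing D-mod} and the results of section~\ref{minimal resolutions} on which it rests — so the only point requiring care here is the dimension bookkeeping in the last paragraph: one must verify that inverting $f$ drops $\dim \Supp$ by \emph{exactly} one rather than by two or more, and this is precisely where catenariness enters, through Proposition~\ref{intersection of height d-1 primes}. The remaining ingredients — the reduction to $\fq = \fm$, the prime-avoidance choice of $f$, and the monotonicity of injective dimension under localization — are routine.
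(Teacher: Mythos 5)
Your proof is correct and is essentially the paper's own argument: choose $f \in \fm$ by prime avoidance outside the finitely many minimal primes of $M$, apply Proposition \ref{injdim equals dim after localizing D-mod} with $\fq = \fm$, and conclude via $\injdim_R M \geq \injdim_{R_f} M_f = \dim \Supp_{R_f} M_f$. The only difference is that you verify the dimension bookkeeping $\dim \Supp_{R_f} M_f \geq \dim \Supp_R M - 1$ explicitly through Proposition \ref{intersection of height d-1 primes}, whereas the paper simply asserts the equality $\dim \Supp_{R_f} M_f = \dim \Supp_R M - 1$ without comment.
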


\begin{proof}
We may assume that $\dim \Supp_R M \geq 2$, as otherwise there is nothing to prove. Since $M$ is holonomic, it has finitely many associated primes as an $R$-module. By prime avoidance, we can choose a nonzero element $f \in \fm$ that does not belong to any minimal prime of $M$. We have $\dim \Supp_{R_f} M_f = \dim \Supp_R M - 1$. By Proposition \ref{injdim equals dim after localizing D-mod} (applied to $\fq = \fm$), we have $\injdim_{R_f} M_f = \dim \Supp_{R_f} M_f$. Since $\injdim_R M \geq \injdim_{R_f} M_f$, the theorem follows.
\end{proof}

\begin{remark}\label{bound is sharp}
The lower bound in Theorem \ref{holonomic over power series} is the best possible. Indeed, let $\fp \subseteq R$ be a prime ideal of height $n-1$, and let $E(R/\fp)$ be the injective hull of $R/\fp$. By Proposition \ref{which inj hulls are finite}(a), $E(R/\fp)$ is a holonomic $D$-module, yet we have $\injdim_R E(R/\fp) = 0$ and $\dim \Supp_R E(R/\fp) = 1$. As shown by Hellus in \cite[Example 2.9]{hellus}, this example can be realized as a local cohomology module of $R$. Take $n=3$ and let $I = (x_1x_2, x_1x_3)$ and $\fp = (x_2, x_3)$. Then $\hgt \fp = n-1 = 2$ and the holonomic $D$-module $H^2_I(R)$ is isomorphic to $E(R/\fp)$, therefore has injective dimension equal to one less than the dimension of its support. 
\end{remark}

\section{Injective dimension of $F$-finite $F$-modules}\label{injdim of F-modules}

In this section, we prove the positive-characteristic part (Theorem \ref{F-finite over regular char p}) of our main theorem. We begin with a counterpart to Proposition \ref{injdim equals dim after localizing D-mod}.  

\begin{prop}\label{injdim equals dim after localizing F-mod}
Let $(R, \fm)$ be a regular local ring of characteristic $p > 0$, and let $M$ be an $F$-finite $F$-module. Let $\fq$ be a prime ideal of $R$ belonging to $\Supp_R M$, and let $f \in \fq R_{\fq}$ be a element that does not belong to any minimal prime of $M_{\fq}$. Then
\[
\injdim_{(R_{\fq})_f} (M_{\fq})_f = \dim \Supp_{(R_{\fq})_f} (M_{\fq})_f.
\]
\end{prop}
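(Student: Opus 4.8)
The plan is to mirror, line by line, the proof of Proposition~\ref{injdim equals dim after localizing D-mod}, replacing the $D$-module inputs with their $F$-module analogues. The structure is an induction on $\dim \Supp_{R_{\fq}} M_{\fq}$, and at every step where the $D$-module proof invoked a result from Section~\ref{FandDmodules} or Section~\ref{minimal resolutions}, there is a parallel $F$-module statement already proved in the excerpt.

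First I would set up the induction. The key ambient fact is that for any multiplicative subset $S \subseteq R$ one has $\injdim_{S^{-1}R} S^{-1}M \leq \dim \Supp_{S^{-1}R} S^{-1}M$; in the $F$-module case this follows from Proposition~\ref{F-modules omnibus}(a,c) (localization of an $F$-finite $F$-module is $F$-finite over the localized ring, and that ring is still noetherian regular of characteristic $p$), so no separate ``localization'' theorem analogous to Theorem~\ref{inj dim bound for arbitrary localization} is needed. As in the $D$-module proof, I would reduce immediately to $\hgt \fq \geq 2$ (otherwise either the target dimension is $0$, handled by the displayed bound, or no admissible $f$ exists). The base case: when $\fq$ is minimal in $\Supp_R M$, the localization $M_{\fq}$ has zero-dimensional support over the regular local ring $R_{\fq}$, hence is a direct sum of copies of $E_{R_{\fq}}(R_{\fq}/\fq R_{\fq})$ and so is injective over $R_{\fq}$; then $(M_{\fq})_f$ is injective over $(R_{\fq})_f$ for any $f$. (Here I use that $M_{\fq}$ is still an $F$-finite $F$-module over $R_{\fq}$, so that its support being $\{\fq R_{\fq}\}$ forces injectivity by Proposition~\ref{F-modules omnibus}(a), exactly as in the $D$-module argument but with Proposition~\ref{F-modules omnibus}(a) in place of Proposition~\ref{D-modules omnibus}(a).)

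Next, the inductive step. Suppose the equality holds whenever $\dim \Supp_{R_{\fp}} M_{\fp} \leq l$, and fix $\fq$ with $\dim \Supp_{R_{\fq}} M_{\fq} = l+1$ and $f \in \fq R_{\fq}$ avoiding the minimal primes of $M_{\fq}$. Choose $\fp R_{\fq} \in \Supp_{R_{\fq}} M_{\fq}$ with $\dim \Supp_{R_{\fp}} M_{\fp} = l$ and $f \in \fp R_{\fq}$; since $R_{\fp}$ is local of dimension equal to that support dimension's ambient... more precisely, since $f$ avoids the minimal primes of $M_{\fp}$, we get $\dim \Supp_{(R_{\fp})_f}(M_{\fp})_f = l-1$, and the inductive hypothesis gives $\injdim_{(R_{\fp})_f}(M_{\fp})_f = l-1$. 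As $(R_{\fp})_f$ is a localization of $(R_{\fq})_f$, this yields $l - 1 \leq \injdim_{(R_{\fq})_f}(M_{\fq})_f \leq l$, so it remains only to exclude the value $l-1$. If $\injdim_{(R_{\fq})_f}(M_{\fq})_f = l-1$, then (passing through $(R_{\fp})_f$ where the top Bass number is nonzero) there is a prime $(\fs R_{\fq})_f$ with $\mu^{l-1}((\fs R_{\fq})_f, (M_{\fq})_f) > 0$; since $f \in \fp R_{\fq} \setminus \fs R_{\fq}$ and $\fp R_{\fq} \subset \fq R_{\fq}$ properly, we get $\hgt \fs \leq \hgt \fq - 2$, so $(\fs R_{\fq})_f$ is \emph{not} maximal in $(R_{\fq})_f$. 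Now invoke Proposition~\ref{inj hull is finite}(c) (noting $(R_{\fq})_f$ is a noetherian regular domain of characteristic $p$, so the $F$-finiteness of the indecomposable summand of the last term of the minimal injective resolution applies) together with Corollary~\ref{special cases}(b): the latter says that over $(R_{\fq})_f$, the module $E_{(R_{\fq})_f}((R_{\fq})_f/(\fs R_{\fq})_f)$ is $F$-finite only if $(\fs R_{\fq})_f$ is maximal, which it is not — a contradiction. Hence $\injdim_{(R_{\fq})_f}(M_{\fq})_f = l$, completing the induction.

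The main obstacle, and the only place the $F$-module case is not a verbatim transcription, is checking that all the supporting machinery genuinely applies to $(R_{\fq})_f$: this ring must be a noetherian regular domain of characteristic $p$ (it is, being a localization of $R$), it must be catenary (it is, being regular) so that Proposition~\ref{localization is pseudo-Jacobson} and hence Corollary~\ref{special cases}(b) apply, and $(M_{\fq})_f$ must remain an $F$-finite $F$-module over it (Proposition~\ref{F-modules omnibus}(c)). One subtlety worth noting explicitly: Corollary~\ref{special cases}(b) is stated for a \emph{regular local} ring $R$ and the localization $R_f$ at a single element $f \in \fm$, whereas here the relevant ring is $(R_{\fq})_f$ — but $R_{\fq}$ is itself a regular local ring with maximal ideal $\fq R_{\fq}$ of dimension $\hgt \fq \geq 2$ and $f \in \fq R_{\fq}$, so Corollary~\ref{special cases}(b) applies directly with $R$ replaced by $R_{\fq}$. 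Once these bookkeeping points are in place, the argument runs exactly as above.
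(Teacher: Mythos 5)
Your proposal is correct and takes essentially the same approach as the paper: the paper's own proof of Proposition~\ref{injdim equals dim after localizing F-mod} consists precisely of observing that the localization bound follows from Proposition~\ref{F-modules omnibus}(a,c) and then running the proof of Proposition~\ref{injdim equals dim after localizing D-mod} word for word, using Proposition~\ref{inj hull is finite}(c) and Corollary~\ref{special cases}(b) (applied with the regular local ring $R_{\fq}$ in place of $R$) instead of the $D$-module inputs. Your explicit transcription, including the bookkeeping that $(R_{\fq})_f$ is a noetherian regular domain of characteristic $p$ and that $(M_{\fq})_f$ remains $F$-finite over it, is exactly that argument.
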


\begin{proof}
If $S \subseteq R$ is a multiplicative subset, then
\[
\injdim_{S^{-1}R} S^{-1}M \leq \dim \Supp_{S^{-1}R} S^{-1}M
\] 
by Proposition \ref{F-modules omnibus}(a,c). The proof is now word-for-word the same as the proof of Proposition \ref{injdim equals dim after localizing D-mod}, except that we use part (b) of Corollary \ref{special cases} instead of part (a).
\end{proof}

\begin{thm}\label{F-finite over regular char p}
Let $R$ be a noetherian regular ring of characteristic $p > 0$, and let $M$ be an $F$-finite $F$-module. Then
\[
\injdim_R M \geq \dim \Supp_R M - 1.
\]
\end{thm}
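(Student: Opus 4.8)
The plan is to reduce the global (non-local) statement to the local case via localization, and then apply Proposition~\ref{injdim equals dim after localizing F-mod}. First I would dispose of the trivial case: if $\dim \Supp_R M \leq 1$ there is nothing to prove, so assume $\dim \Supp_R M \geq 2$. Next, since $t := \injdim_R M < \infty$ by Proposition~\ref{F-modules omnibus}(a), there is a prime $\fp \subseteq R$ with $\mu^t(\fp, M) > 0$, and hence $E(R/\fp)$ is a summand of $E^t(M)$. Localizing the minimal injective resolution $E^\bullet(M)$ at $\fp$ does not change its length (because $\fp \in \Supp_R E^t(M)$), so $\injdim_{R_\fp} M_\fp = t = \injdim_R M$; moreover $R_\fp$ is a regular local ring and $M_\fp$ is an $F_{R_\fp}$-finite $F$-module by Proposition~\ref{F-modules omnibus}(c). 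Thus it suffices to prove the inequality when $R = (R, \fm)$ is regular local, i.e.\ to show $\injdim_R M \geq \dim \Supp_R M - 1$ for an $F$-finite $F$-module $M$ over a regular local ring.

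In the local case I would argue exactly as in the proof of Theorem~\ref{holonomic over power series}. Since $M$ is $F$-finite, it has finitely many associated primes as an $R$-module by Proposition~\ref{F-modules omnibus}(d); in particular it has finitely many minimal primes. By prime avoidance, choose a nonzero $f \in \fm$ that lies in no minimal prime of $M$. Then $\dim \Supp_{R_f} M_f = \dim \Supp_R M - 1$, and since $f \in \fm = \fm R_\fm$ is a nonzero element lying in no minimal prime of $M = M_\fm$, Proposition~\ref{injdim equals dim after localizing F-mod} (applied with $\fq = \fm$) gives
\[
\injdim_{R_f} M_f = \dim \Supp_{R_f} M_f = \dim \Supp_R M - 1.
\]
Since localization cannot increase injective dimension, $\injdim_R M \geq \injdim_{R_f} M_f = \dim \Supp_R M - 1$, which is what we wanted.

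The main content of the argument has already been packaged into Proposition~\ref{injdim equals dim after localizing F-mod}, so the remaining obstacle is essentially bookkeeping: one must be careful that the localization step at the start genuinely reduces to the regular local case (checking that $M_\fp$ is still an $F$-finite $F$-module over the regular local ring $R_\fp$, and that no length is lost) and that the hypotheses of Proposition~\ref{injdim equals dim after localizing F-mod} are met with $\fq = \fm$ after passing to the local ring --- in particular that the chosen $f$ avoids the (finitely many) minimal primes of $M$ and is a nonzero element of $\fm R_\fm$. There is one mild subtlety worth a sentence: the reduction should be phrased so that we pass to $R_\fp$ first and only then choose $f$ inside $\fm_\fp = \fp R_\fp$; alternatively, and more cleanly, one can combine the two steps by noting that $\injdim_R M = \injdim_{R_\fp} M_\fp$ and applying Proposition~\ref{injdim equals dim after localizing F-mod} to the regular local ring $R_\fp$ with its maximal ideal in the role of $\fq$. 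Either way, no new ideas beyond those already developed are needed.
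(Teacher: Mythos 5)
Your treatment of the local case is exactly the paper's argument and is fine, but your reduction from the global statement to the local one has a genuine gap. You localize at a prime $\fp$ with $\mu^t(\fp,M)>0$, which does give $\injdim_{R_\fp}M_\fp=\injdim_R M$, but the other side of the inequality is not preserved: localization can only decrease the dimension of the support, and a prime carrying the top Bass number need not lie over the top-dimensional part of $\Supp_R M$, so in general $\dim\Supp_{R_\fp}M_\fp$ may be strictly smaller than $\dim\Supp_R M$. The local case applied over $R_\fp$ therefore only yields $\injdim_R M\geq \dim\Supp_{R_\fp}M_\fp-1$, which can be weaker than the claim; "it suffices to prove the inequality when $R$ is regular local" is unjustified as you have set it up. That the top Bass prime can fail to see the full support dimension is already visible in the sharp examples of Remark~\ref{bound is sharp}: for $M=E(R/\fq)$ with $\fq$ of coheight one (and $F$-finite), the only prime with $\mu^t(\fp,M)>0$ is $\fq$ itself, and $\dim\Supp_{R_\fq}M_\fq=0$ while $\dim\Supp_R M=1$. (There the theorem is vacuous, but nothing in your argument rules out the analogous one-dimension loss when $\dim\Supp_R M\geq 2$, where it would matter.) Your "cleaner" variant at the end has the same defect, since it again bounds $\injdim_R M$ below only by $\dim\Supp_{R_\fp}M_\fp-1$; and a minor related point is that after localizing at $\fp$ you would also need $\dim\Supp_{R_\fp}M_\fp\geq 2$ for the choice of $f$ to make sense, which is not guaranteed by $\dim\Supp_R M\geq 2$.

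The fix is the reduction the paper actually uses: instead of choosing a prime that preserves injective dimension, choose a maximal ideal $\fm\in\Supp_R M$ that preserves the support dimension, i.e.\ with $\dim\Supp_{R_\fm}M_\fm=\dim\Supp_R M$ (possible because the support is specialization-closed). Then the inequality $\injdim_{R_\fm}M_\fm\leq\injdim_R M$ points the right way, and the local case gives $\injdim_R M\geq\injdim_{R_\fm}M_\fm\geq\dim\Supp_{R_\fm}M_\fm-1=\dim\Supp_R M-1$. With that replacement, the rest of your argument (finitely many associated primes via Proposition~\ref{F-modules omnibus}(d), prime avoidance to choose $f\in\fm$, Proposition~\ref{injdim equals dim after localizing F-mod} with $\fq=\fm$, and $\injdim_R M\geq\injdim_{R_f}M_f$) coincides with the paper's proof of Theorem~\ref{F-finite over regular char p}, just as in Theorem~\ref{holonomic over power series}.
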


For the same reasons as in Remark \ref{bound is sharp}, the lower bound in Theorem \ref{F-finite over regular char p} is the best possible.

\begin{proof}
We may assume that $\dim \Supp_R M \geq 2$, as otherwise there is nothing to prove. We may also assume that $(R, \fm)$ is local; if the local case is known, we may choose a maximal ideal $\fm$ in $\Supp_R M$ such that $\dim \Supp_R M = \dim \Supp_{R_{\fm}} M_{\fm}$, and we have
\[
\dim \Supp_R M - 1 = \dim \Supp_{R_{\fm}} M_{\fm} - 1 \leq \injdim_{R_{\fm}} M_{\fm} \leq \injdim_R M,
\]
so that the global case follows. Since $M$ is $F$-finite, it has finitely many associated primes as an $R$-module. By prime avoidance, we can choose a nonzero element $f \in \fm$ that does not belong to any minimal prime of $M$. We have $\dim \Supp_{R_f} M_f = \dim \Supp_R M - 1$. By Proposition \ref{injdim equals dim after localizing F-mod} (applied to $\fq = \fm$), we have $\injdim_{R_f} M_f = \dim \Supp_{R_f} M_f$. Since $\injdim_R M \geq \injdim_{R_f} M_f$, the theorem follows.
\end{proof}

Let $(R,\fm)$ be a regular local ring of characteristic $p > 0$, and let $M$ be an $F$-finite $F$-module. Set $n=\dim R$, $d=\dim \Supp_R M$, and $t=\injdim_R M$. We know by Theorem \ref{F-finite over regular char p} that $t \in \{d-1, d\}$. We also know by Propositions \ref{inj hull is finite}(c) and \ref{which inj hulls are finite}(c) that if $\fp \subseteq R$ is a prime ideal such that $\mu^t(\fp, M) > 0$, then $\hgt \fp \in \{n-1, n\}$. 

It is easy to see that if $t = d$, then $\mu^t(\fp, M) > 0$ if and only if $\fp = \fm$: indeed, if $\mu^t(\fp, M) > 0$ for some non-maximal prime ideal $\fp$ in the support of $M$, we can localize at $\fp$, obtaining an $F_{R_{\fp}}$-module $M_{\fp}$ whose injective dimension is still $d$ but whose support has dimension strictly less than $d$, in contradiction to Proposition \ref{F-modules omnibus}(a).

\begin{question}\label{last inj res term only supported at maximal ideal}
Does the converse hold? That is, if $\mu^t(\fp, M) > 0$ only for $\fp = \fm$, must we have $t = d$?
\end{question}

One can also ask the analogous question for holonomic $D$-modules over formal power series rings.

A positive answer to Question \ref{last inj res term only supported at maximal ideal} would impose strong constraints on the form of the minimal injective resolution $E^{\bullet}(M)$. In particular, it follows from an easy induction argument that we would have $\dim \Supp_R E^i(M) = \dim \Supp_R M - i$ for $0 \leq i \leq t$. 

\bibliographystyle{plain}

\bibliography{injectivedimension2}

\begin{thebibliography}{10}

\bibitem{BjorkBookRIngDiffOperators}
J.-E. Bj{\"o}rk.
\newblock {\em Rings of differential operators}, volume~21 of {\em
  North-Holland Mathematical Library}.
\newblock North-Holland Publishing Co., Amsterdam-New York, 1979.

\bibitem{LCBookBrodmannSharp}
M.~P. Brodmann and R.~Y. Sharp.
\newblock {\em Local cohomology}, volume 136 of {\em Cambridge Studies in
  Advanced Mathematics}.
\newblock Cambridge University Press, Cambridge, second edition, 2013.
\newblock An algebraic introduction with geometric applications.

\bibitem{EGAIV4}
A.~Grothendieck and J.~Dieudonn\'{e}.
\newblock El\'{e}ments de g\'{e}om\'{e}trie alg\'{e}brique {IV}: \'{E}tude
  locale des sch\'{e}mas et des morphismes de sch\'{e}mas, quatri\`{e}me
  partie.
\newblock {\em Publ. Math. IH\'{E}S}, 32:5--361, 1967.

\bibitem{hellus}
M.~Hellus.
\newblock A note on the injective dimension of local cohomology modules.
\newblock {\em Proc. Amer. Math. Soc.}, 136:2313--2321, 2008.

\bibitem{HunekeSharpBassnumbersoflocalcohomologymodules}
C.~Huneke and R.~Y. Sharp.
\newblock Bass numbers of local cohomology modules.
\newblock {\em Trans. Amer. Math. Soc}, 339(2):765--779, 1993.

\bibitem{LyubeznikFinitenessLocalCohomology}
G.~Lyubeznik.
\newblock Finiteness properties of local cohomology modules (an application of
  {$D$}-modules to commutative algebra).
\newblock {\em Invent. Math.}, 113(1):41--55, 1993.

\bibitem{LyubeznikFModulesApplicationsToLocalCohomology}
G.~Lyubeznik.
\newblock {$F$}-modules: applications to local cohomology and {$D$}-modules in
  characteristic {$p>0$}.
\newblock {\em J. Reine Angew. Math.}, 491:65--130, 1997.

\bibitem{LyubeznikCharFreeFiniteness}
G.~Lyubeznik.
\newblock Finiteness properties of local cohomology modules: a
  characteristic-free approach.
\newblock {\em J. Pure Appl. Algebra}, 151:43--50, 2000.

\bibitem{MatsumuraCRT}
H.~Matsumura.
\newblock {\em Commutative ring theory}, volume~8 of {\em Cambridge Studies in
  Advanced Mathematics}.
\newblock Cambridge University Press, Cambridge, 1986.

\bibitem{PuthenpurakaInjectiveResolutionofLC}
T.~Puthenpurakal.
\newblock On injective resolutions of local cohomology modules.
\newblock {\em Illinois J. Math.}, 58(3):709--718, 2014.

\bibitem{SharpCousinComplex}
R.~Y. Sharp.
\newblock The {C}ousin complex for a module over a commutative {N}oetherian
  ring.
\newblock {\em Math. Z.}, 112:340--356, 1969.

\bibitem{zhanginjdim}
W.~Zhang.
\newblock On injective dimension of {$F$}-finite {$F$}-modules and holonomic
  {$D$}-modules.
\newblock \texttt{arXiv:1606.00536v3}, to appear in \emph{Bull. Lond. Math.
  Soc.}, 2017.

\end{thebibliography}

\end{document}